\def\N{\mathbb{N}}
\def\eps{\varepsilon}
\def\R{\mathbb{R}}
\def\cH{\mathcal{H}}
\renewcommand{\O }{\Omega}
\renewcommand{\d }{\delta}
\renewcommand{\l }{\lambda }
\long\def\salta#1{\relax}
\newcommand{\dist}{\mbox{dist}\;}
\newcommand{\ren}{\mathbb{R}^N}
\newcommand{\x}{\times}
\newcommand{\car}{{\raise2pt\hbox{$\chi$}}}
\renewcommand{\d }{\delta }
\renewcommand{\l }{\lambda }
\newcommand{\RR}{\mathbb R}
\renewcommand{\leq}{\leqslant}
\renewcommand{\geq}{\geqslant}
\newtheorem{theorem}{Theorem}[section]
\newtheorem{Corollary}[theorem]{Corollary}
\newtheorem{Lemma}[theorem]{Lemma}
\numberwithin{equation}{section}
\theoremstyle{definition}
\newtheorem{remark}[theorem]{Remark}
\title{Uniqueness and nondegeneracy for Dirichlet fractional problems in bounded domains via asymptotic methods}
\author{
Abdelrazek Dieb,
Isabella Ianni\footnote{I. Ianni is  supported by INDAM - GNAMPA,   {\sl Fondi Ateneo - Sapienza},   PRIN  grant $2017$JPCAPN$\_003$, and  VALERE: grant {\sl Vain-Hopes}.},
\& Alberto Saldaña \footnote{A. Saldaña is supported  by 
the 2021 Visiting Professor Programme of La Sapienza University (Italy), by 
CONACYT grant A1-S-10457 (Mexico), and by UNAM-DGAPA-PAPIIT grant IA101721 (Mexico).
}}
\date{}
\begin{document}

\maketitle

\abstract{We consider positive solutions of a fractional Lane-Emden type problem in a bounded domain with Dirichlet conditions. We show that uniqueness and nondegeneracy hold  for the asymptotically linear problem in general domains. Furthermore, we also prove that all the known uniqueness and nondegeneracy results in the local case extend to the nonlocal regime when the fractional parameter $s$ is sufficiently close to 1.}

\section {Introduction  and main results}
We study the uniqueness and the nondegeneracy  of  positive solutions of the Dirichlet Lane-Emden-type fractional problem
\begin{align}\label{main}
    (-\Delta)^{s} u+\l u= u^{p} \quad{\text{ in }} \O,\qquad 
u = 0 \quad {\text{ in }}\RR^N\backslash \O,\qquad 
u> 0 \quad  \text{ in } \O,
\end{align}
where $\Omega$ is a bounded domain of class $C^2$ in $\ren$ ($N\geq 2$), $p>1$, $\lambda\in\mathbb R$, $s\in (0,1)$, and
 $(-\Delta)^s$ denotes the fractional Laplacian given by
\begin{align}\label{operator}
(-\Delta)^{s}u(x):=c_{N,s}\, 
p.v.\int_{\mathbb{R}^{N}}{\frac{u(x)-u(y)}{|x-y|^{N+2s}}\, dy},\qquad c_{N,s}
:=4^{s}\pi^{-\frac{N}{2}}\frac{\Gamma(\frac{N}{2}+s)}{\Gamma(2-s)}s(1-s).
\end{align}
 
A solution $u$ of \eqref{main} is called \emph{nondegenerate} if the linearized problem
 \begin{align}\label{main2}
    (-\Delta)^s v +\lambda v&= pu^{p-1}v\quad \text{ in }\Omega,\qquad v=0\quad \text{ on }\R^N\backslash \Omega,
\end{align}
admits only the trivial solution. 
 
It is immediate to see (by testing the equation with the first Dirichlet eigenfunction of $(-\Delta)^s$ in $\Omega$) that in order to have solutions of \eqref{main} the parameter $\l$ must satisfy $\lambda> -\lambda_1^s(\Omega)$ (here $\lambda_1^s(\Omega)$ is the first Dirichlet eigenvalue of  $(-\Delta)^s$ in $\Omega$). 
Actually, similarly to the local case $s=1$, standard variational methods can be used to yield the existence of solutions of \eqref{main} in any bounded smooth domain $\Omega$ whenever $\lambda> -\lambda_1^s(\Omega)$  and $1<p<\frac{N+2s}{N-2s}$, see, for example, \cite{ServadeiValdinociDCDS2013}.  
For critical ($p=\frac{N+2s}{N-2s}$) and supercritical ($p>\frac{N+2s}{N-2s}$) regimes, existence is more involved.
Indeed, the fractional Pohozaev identity implies the nonexistence of solutions of \eqref{main} if $p\geq \frac{N+2s}{N-2s}$, $\lambda=0$, and if $\O$ is \emph{starshaped}, see \cite{FallWethNonexistence, RosOtonSerraARMA2014}; on the other hand, there are many existence results available in the literature when either $\lambda\neq0$ or the domain is not starshaped, see, for instance, \cite{ServadeiValdinociTAMS2015,ServadeiValdinociCPAA2013,GuoLiPistoiaYanJDE2021, DLS17,HS21, LongYanYangJDE2019}. 
 
 Concerning uniqueness of solutions, this is a difficult problem\textemdash even in the local case \textemdash and it depends in a subtle way on the geometry and topology of the domain $\Omega$, on the value of the parameter $\lambda$, and on the exponent $p$. As we discuss below, this analysis is even harder for nonlocal problems and it has been addressed so far only for ground state solutions of  fractional Schrödinger equations in $\R^N$ in \cite{FS,FV14,FrankLenzmann2013,ChenLiOu2006,LiJEMS2004}.  As far as we know, in the nonlocal case $s\in(0,1)$, the results that we present in this paper are the first to consider uniqueness of solutions in bounded domains.
 
 We remark that the uniqueness of solutions to \eqref{main} does not hold in general, for instance, multiplicity results can be obtained in suitable domains using a Lyapunov-Schmidt reduction argument, see \cite[Theorem 1.2]{DLS17} (see also Remark \ref{rmk:mult} below) for results when $p$ is close to the critical exponent $\frac{N+2s}{N-2s}$ (from below and from above) and $s\in(0,1)$,  see also \cite{LongYanYangJDE2019} for multiplicity results in the critical case (with $\lambda=0$) in domains with shrinking holes.  

Our first theorem shows that all the uniqueness results known for the Laplacian can be extended to the fractional case for $s$ sufficiently close to 1. Let $\lambda_1^1(\Omega)>0$ denote the first Dirichlet eigenvalue for $-\Delta$ in $\Omega$, and let $2^*=\frac{2N}{N-2}$ if $N>2$ and $2^*=\infty$ if $N=2.$ 

\begin{theorem}\label{thm:scto1}
Let $N\geq 2$, $p\in(1,2^*-1)$, $\lambda>-\lambda_1^1(\Omega)$, and $\Omega$ be such that the problem 
\begin{align}\label{c:intro}
    -\Delta u +\lambda u&= u^{p}\quad \text{ in }\Omega,\qquad u=0\quad \text{ on }\partial \Omega,
\end{align}
has a unique positive solution $u$ which is nondegenerate. Then, there is $\sigma=\sigma(\Omega,\lambda,p)\in(0,1)$ such that, for $s\in(\sigma, 1]$, the problem \eqref{main} has a unique solution and it is nondegenerate.
\end{theorem}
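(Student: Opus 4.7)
The plan is to prove both uniqueness and nondegeneracy simultaneously by a contradiction argument based on asymptotic analysis as $s\to 1^-$, exploiting the fact that in the limit the fractional problem collapses to \eqref{c:intro}, whose solution is unique and nondegenerate by hypothesis.

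For uniqueness, I would suppose there exist a sequence $s_n\uparrow 1$ and, for each $n$, two distinct positive solutions $u_n\not\equiv w_n$ of \eqref{main} with $s=s_n$. The first task is a uniform compactness statement: using the subcriticality $p<2^*-1$ (which forces $p<(N+2s_n)/(N-2s_n)$ for $n$ large), Moser-type iteration gives uniform $L^\infty$-bounds, and the Ros-Oton--Serra interior/boundary regularity yields uniform $C^{0,\alpha}(\overline{\Om})$ estimates with constants stable as $s\to 1$. Combined with the well-known pointwise convergence $(-\Delta)^{s_n}\varphi\to -\Delta \varphi$ on $C_c^\infty$ test functions and convergence of the associated bilinear forms, a subsequence $u_n\to u^*$, $w_n\to w^*$ uniformly on $\overline{\Om}$, and passing to the limit in the weak formulation identifies $u^*$ and $w^*$ as positive solutions of \eqref{c:intro}. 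The uniqueness hypothesis forces $u^*=w^*=u$.

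Next, set $v_n:=(u_n-w_n)/\|u_n-w_n\|_{L^\infty(\Om)}$, so $\|v_n\|_{L^\infty}=1$, and observe
\begin{align*}
(-\Delta)^{s_n} v_n+\lambda v_n=c_n\, v_n \quad \text{in }\Om,\qquad v_n=0\quad \text{on }\RR^N\setminus\Om,
\end{align*}
with $c_n:=p\int_0^1(tu_n+(1-t)w_n)^{p-1}\,dt\to p\,u^{p-1}$ uniformly on $\overline{\Om}$. Applying the same uniform regularity to $v_n$, a subsequence converges in $C^0(\overline{\Om})$ to some $v^*$ solving the local linearized problem \eqref{main2} at $s=1$. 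By the assumed nondegeneracy of $u$, $v^*\equiv 0$, which contradicts $\|v_n\|_{L^\infty}=1$, provided one can rule out concentration of the maximum of $v_n$ at $\partial\Om$. An identical blow-up argument handles nondegeneracy for \eqref{main}: a nontrivial normalized solution $v_n$ of the fractional linearized equation at $u_n$ must converge to a nontrivial solution of \eqref{main2} at $s=1$, contradicting the local nondegeneracy.

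The principal obstacle is establishing uniform estimates in $s$ up to the boundary. Interior regularity is unproblematic, but Dirichlet solutions of the fractional problem behave like $\mathrm{dist}(\cdot,\partial\Om)^{s_n}$ and this degenerates as $s_n\to 1$; one must therefore extract the limit through quantities like $v_n/\mathrm{dist}(\cdot,\partial\Om)^{s_n}$, in the spirit of \cite{RosOtonSerraARMA2014,FallWethNonexistence}, to prevent the $L^\infty$-mass of $v_n$ from escaping to $\partial\Om$. Once such uniform boundary estimates are in hand, the contradiction argument above closes both the uniqueness and the nondegeneracy parts of the theorem.
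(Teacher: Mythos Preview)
Your overall strategy---contradiction via $s_n\to 1$, normalizing $(u_n-w_n)/\|u_n-w_n\|_{L^\infty}$, and passing to the limit to contradict the local nondegeneracy---is exactly what the paper does. But there is a genuine gap in how you obtain the uniform a~priori $L^\infty$ bound.

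Moser iteration does \emph{not} by itself give uniform $L^\infty$ bounds for all positive solutions of a superlinear problem: it bootstraps an $L^q$ bound to $L^\infty$, but you still need a uniform starting norm, and this is precisely the hard step. The paper devotes a separate result (Theorem~\ref{thm:unif}) to this, using a Gidas--Spruck blow-up argument: assuming $\|u_n\|_{L^\infty}\to\infty$, one rescales to a solution of $-\Delta v=v^p$ in $\R^N$ or $\R^N_+$ and invokes the classical Liouville theorems. The delicacy here is that the rescaling must be carried out with constants uniform in $s$ as $s\to 1^-$; the paper uses the explicit $C^s$ interior estimate of \cite{JSW20} (Lemma~\ref{lem:int:reg}) and a carefully built barrier (Lemma~\ref{lem:unif}) whose constants are tracked in $s$. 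As Remark~\ref{rmk:c} stresses, many standard fractional estimates have constants that blow up as $s\to 1^-$, so this is not routine.

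Two further points. First, once $u_n\to u^*$ uniformly you must rule out $u^*\equiv 0$; the paper does this in Lemma~\ref{ustar:lem} by comparing $\|u_n\|_{L^{p+1}}$ with the least-energy solution, whose convergence to a nontrivial local solution is known from \cite{BS20}. Second, your concern about the maximum of $v_n$ escaping to $\partial\Omega$ is handled more simply than you suggest: the $C^s(\overline\Omega)$ bound of Lemma~\ref{threg} has a constant independent of $s$, so $v_n\to v^*$ in $C^\beta(\overline\Omega)$ and $\|v^*\|_{L^\infty}=1$ follows directly---no quotient by $\operatorname{dist}^{s_n}$ is needed here (that device is used in the $p\to 1$ proof, not in this one).
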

 To clarify the reach of Theorem~\ref{thm:scto1}, we review briefly the literature on known results for uniqueness and nonuniqueness of positive solutions to \eqref{c:intro}. 

It is known that uniqueness does not hold in general, indeed  both the geometry and the topology of the domain play a role to obtain multiplicity results.
The typical case when there is more than one positive solution for problem \eqref{c:intro} is when $\O$ is the annulus or, more in  general, a suitable annular shaped domain (see, for example, \cite{CatrinaWangJDE1999, GladialiGrossiPacellaShrikanthCalcVar2011, LiJDE1990, BartschClappGrossiPacellaMA2012}, see also  \cite{EspositoMussoPistoiaJDE2006} for not simply connected planar domains), but uniqueness may fail also in some contractible domains (see   \cite{EspositoMussoPistoiaJDE2006, DancerJDE1988, DancerJDE1990} for dumb-bell domains, even starshaped).

Conversely, if $\Omega$ is a ball, then the solutions of \eqref{c:intro} are radially symmetric as a consequence of the classical  symmetry result by Gidas, Ni, and Nirenberg \cite{GNN1979}; hence, one can get uniqueness  in the local case using ODEs techniques.  In particular, in the case $\lambda =0$ one immediately obtains uniqueness  by scaling arguments which  exploit the homogeneity of the power nonlinearity (see \cite{GNN1979}), while, for $\lambda\neq 0$, the proof turns out to be less direct and the complete result, for the full range of values of  $p$ and $\lambda$ for which existence holds, is contained in several papers 
(see \cite{NiNussbaumCPAM1985, KwongLiTAMS1992, ZhangCPDE1992, SrikanthDiffIntEq1993}), where uniqueness is obtained  by first showing that radial solutions are non-degenerate, thus highlighting a strong relationship between uniqueness and nondegeneracy.   We also mention \cite{AdimurthiYadavaARMA1994} and \cite{AftalionPacellaJDE2003},  where the  uniqueness result in the ball for  the local problem \eqref{c:intro} is reobtained via the Pohozaev identity  and a purely PDE approach, respectively.

In view of these results for the local problem \eqref{c:intro}, we deduce the following corollary of Theorem \ref{thm:scto1}.
\begin{Corollary}\label{corollary:ball}
Let $\Omega$ be a ball of $\mathbb R^N$, $N\geq 2$, $p\in(1,2^*-1)$, $\lambda>-\lambda_1^1(\Omega)$.
Then, there is $\sigma=\sigma(N,\lambda,p)\in(0,1)$ such that, for $s\in(\sigma, 1]$,
the problem \eqref{main} has a unique solution and it is nondegenerate.
\end{Corollary}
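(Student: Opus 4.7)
The plan is to obtain \Cref{corollary:ball} as a direct application of \Cref{thm:scto1}. What we have to verify is that, when $\Omega$ is a ball, the local problem \eqref{c:intro} meets the two hypotheses of \Cref{thm:scto1} for every $p\in(1,2^*-1)$ and every $\lambda>-\lambda_1^1(\Omega)$: namely, that it admits a unique positive classical solution and that this solution is nondegenerate. Once these two facts are in place, the conclusion follows immediately by choosing $\sigma=\sigma(\Omega,\lambda,p)\in(0,1)$ as provided by \Cref{thm:scto1} and noting that, because $\Omega$ is a ball, this threshold depends only on the dimension $N$ (together with $\lambda$ and $p$).

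First, I would recall that by the classical Gidas--Ni--Nirenberg moving plane theorem \cite{GNN1979}, every positive solution of \eqref{c:intro} in a ball is radial and strictly decreasing in the radial variable. This reduces both the uniqueness and the nondegeneracy question to an ODE problem. For $\lambda=0$, uniqueness is immediate by the scaling invariance exploited in \cite{GNN1979}, while nondegeneracy can then be deduced from an analysis of the linearized radial ODE. For $\lambda\neq 0$, uniqueness of the positive radial solution in the full subcritical range $p\in(1,2^*-1)$ and $\lambda>-\lambda_1^1(\Omega)$ follows by combining the ODE/shooting arguments of \cite{NiNussbaumCPAM1985, KwongLiTAMS1992, ZhangCPDE1992, SrikanthDiffIntEq1993}. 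In those works, uniqueness is actually obtained by first proving nondegeneracy of the radial solution (i.e.\ the radial linearized operator has no kernel), so both hypotheses required by \Cref{thm:scto1} are supplied simultaneously.

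With the local uniqueness and nondegeneracy in hand, I apply \Cref{thm:scto1} to the ball $\Omega$ with the given $\lambda$ and $p$: there exists $\sigma\in(0,1)$ such that, for every $s\in(\sigma,1]$, problem \eqref{main} has a unique positive solution and this solution is nondegenerate. Since for a ball the only geometric information entering the construction of $\sigma$ in \Cref{thm:scto1} is its dimension (balls of different radii can be reduced to one another, e.g.\ by the scaling $u(x)\mapsto r^{2s/(p-1)}u(rx)$ that maps \eqref{main} on $B_r$ with parameter $\lambda$ into the same problem on the unit ball with a rescaled parameter), the threshold $\sigma$ can be taken to depend only on $N$, $\lambda$ and $p$, as stated.

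I do not anticipate a genuine obstacle: the corollary is essentially a bookkeeping step. The only point requiring a little care is making sure that the classical uniqueness/nondegeneracy results cover the full parameter range $\lambda>-\lambda_1^1(\Omega)$ and $1<p<2^*-1$ claimed here (as opposed, e.g., to the more restrictive case $\lambda\ge 0$), which is the reason I would cite \cite{NiNussbaumCPAM1985, KwongLiTAMS1992, ZhangCPDE1992, SrikanthDiffIntEq1993} together, and also verify that the dependence of $\sigma$ on $\Omega$ reduces to dependence on $N$ via the above scaling, so that the statement with $\sigma=\sigma(N,\lambda,p)$ is justified.
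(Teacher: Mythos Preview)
Your approach is correct and is exactly the one the paper takes: the corollary is deduced directly from Theorem~\ref{thm:scto1} once one invokes the classical uniqueness and nondegeneracy of the positive solution of \eqref{c:intro} in a ball, citing \cite{GNN1979} for radial symmetry and \cite{NiNussbaumCPAM1985, KwongLiTAMS1992, ZhangCPDE1992, SrikanthDiffIntEq1993} for the full subcritical range (the paper also points to \cite{AdimurthiYadavaARMA1994, AftalionPacellaJDE2003}).

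One small point of precision: Theorem~\ref{thm:scto1} requires \emph{full} nondegeneracy, i.e.\ that the linearized problem \eqref{main2} with $s=1$ has trivial kernel in $H^1_0(\Omega)$, not merely that the \emph{radial} linearized operator has no kernel, which is what your parenthetical identifies it with. The ODE/shooting references you cite establish the radial part; ruling out nonradial kernel elements needs an additional (standard) spherical-harmonic argument, handled for instance in \cite{AftalionPacellaJDE2003}. This is not a real obstacle, but you should be explicit about which reference supplies the full nondegeneracy. Also, your scaling remark to reduce $\sigma(\Omega,\lambda,p)$ to $\sigma(N,\lambda,p)$ is a bit delicate, since the rescaling sends $\lambda$ to $r^{2s}\lambda$, which depends on $s$; the paper does not spell this out either and the dependence on the specific ball is harmless for the statement.
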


Although a general characterization of the domains $\O$ for which uniqueness   and nondegeneracy for problem \eqref{c:intro} hold  is still missing, results are know  also for some  domains different from the ball, from which other applications of Theorem \ref{thm:scto1} can be deduced. 
In \cite{DancerJDE1988, KawohlLectureNotes1985} it is conjectured that the convexity of the domain $\O$ is a sufficient condition to guarantee uniqueness for problem \eqref{c:intro}. This conjecture has been partially proved in dimension $N=2$ and for $\lambda=0$, again showing first that the solutions are nondegenerate. We refer to \cite{LinMM1994}, where it has been proved for \emph{least energy solutions}, and to   \cite{demak},  where recently the proof has been extended to any positive solution for  $p$ sufficiently large, via a delicate computation of  the Morse index which exploits the asymptotic behavior of the solutions.  As a consequence of the result in \cite{LinMM1994}, we state the following asymptotic result for least energy solutions.
\begin{Corollary}\label{c:le}
Let $\Omega\subset \R^2$ be a smooth convex planar domain, $p>1$, $\lambda=0$.
Then, there is $\sigma=\sigma(\Omega,p)\in(0,1)$ such that, for $s\in(\sigma, 1]$,
the problem \eqref{main} has a unique least energy solution and it is nondegenerate.
\end{Corollary}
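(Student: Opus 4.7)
The plan is to adapt the asymptotic strategy underlying Theorem~\ref{thm:scto1} to the class of least energy solutions, since in the setting of smooth convex planar domains with $\lambda=0$ it is only within this class that uniqueness and nondegeneracy are known for the local problem. The first input is the result of Lin \cite{LinMM1994}: the local problem \eqref{c:intro} with $\lambda=0$ on the smooth convex planar domain $\Omega$ admits a unique least energy solution $u_\star$, and $u_\star$ is nondegenerate.

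Next, I would argue by contradiction, assuming there exist $s_n\to 1^-$ and two distinct least energy solutions $u_n^1\neq u_n^2$ of \eqref{main} with $s=s_n$ and $\lambda=0$. The key technical step is a compactness statement: any sequence $(u_{s_n})$ of least energy solutions of \eqref{main} with $s_n\to 1^-$ admits a subsequence converging strongly, say in $H^1(\Omega)\cap C^{0,\alpha}(\overline{\Omega})$, to a least energy solution of the local problem \eqref{c:intro}. I would obtain uniform a priori bounds from the Mountain Pass characterization (which is well defined for every $p\in(1,\infty)$ in dimension $2$) and from the known $\Gamma$-convergence of the fractional Dirichlet energies to the classical Dirichlet energy as $s\to 1$; strong compactness would then follow from $L^\infty$ and $C^{0,\alpha}$ regularity results for fractional equations with constants uniform for $s$ near $1$. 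Identification of subsequential limits as minimizers of the local Mountain Pass level would follow from the $\Gamma$-convergence combined with the convergence of the Mountain Pass values themselves. By the uniqueness in \cite{LinMM1994}, any such limit equals $u_\star$, so $u_n^1,u_n^2\to u_\star$ along subsequences.

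Finally, the nondegeneracy of $u_\star$ allows me to apply an implicit function theorem, in the spirit of the proof of Theorem~\ref{thm:scto1}, to the map $F(u,s)=(-\Delta)^s u-u^p$ on a suitable Banach pair adapted to the Dirichlet exterior condition (for instance, weighted H\"older spaces incorporating the $\delta^s$ boundary behavior, together with their appropriate image space). This would produce a unique $C^1$-branch $s\mapsto u_s$ of solutions in a neighborhood of $(u_\star,1)$, whose members are automatically least energy for $s$ close to $1$ by continuity of the energy. For $n$ large, both $u_n^1$ and $u_n^2$ would then lie on this branch and hence coincide, contradicting $u_n^1\neq u_n^2$. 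The nondegeneracy of the unique solution is inherited from that of $u_\star$ via a compactness/linearization argument: if some $v_n\not\equiv 0$ solved the linearized fractional problem at $u_n$, normalization and passage to the limit would yield a nontrivial solution of the local linearization at $u_\star$, contradicting its nondegeneracy.

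The main obstacle is the compactness step: one must not merely extract strong limits, but also verify that the fractional Mountain Pass levels converge to the local one, so that subsequential limits of fractional least energy solutions are themselves \emph{least energy} for the limit problem. Without this refinement, one cannot invoke Lin's uniqueness to identify the limit, and the contradiction argument collapses.
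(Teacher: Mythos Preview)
Your overall strategy---Lin's result for the local problem, convergence of fractional least energy solutions to the local least energy solution, and a contradiction via the nondegeneracy of $u_\star$---matches the paper's. You have also correctly identified the crucial point: one must know that subsequential limits of fractional least energy solutions are themselves \emph{least energy} for the limit problem, not merely solutions. The paper handles this by invoking \cite[Theorem~4.5]{BS20} directly rather than redoing a $\Gamma$-convergence argument, but your proposed route would yield the same conclusion.

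The genuine difference is in the uniqueness step. You propose an implicit function theorem applied to $F(u,s)=(-\Delta)^s u - u^p$ on ``weighted H\"older spaces incorporating the $\delta^s$ boundary behavior''. This is delicate: the natural function spaces (e.g.\ $\{u:\ u/\delta^s\in C^\alpha(\overline\Omega)\}$) \emph{depend on $s$}, so it is not obvious how to set up a map that is $C^1$ jointly in $(u,s)$ on a fixed Banach pair; you would need to justify differentiability of $s\mapsto(-\Delta)^s$ in an appropriate sense, which is nontrivial near $s=1$. The paper avoids this entirely by reusing the elementary device from the proof of Theorem~\ref{thm:scto1}: set $w_n:=(u_n^1-u_n^2)/\|u_n^1-u_n^2\|_{L^\infty}$, note that $(-\Delta)^{s_n}w_n=\alpha_n w_n$ with $\alpha_n\to p\,u_\star^{p-1}$, and use Lemma~\ref{Fatou:arg} to pass to the limit and obtain a nontrivial $w\in H^1_0(\Omega)$ solving the linearized local problem at $u_\star$, contradicting nondegeneracy. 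This requires only uniform $C^s$ bounds and the compactness already established, and bypasses any differentiability in $s$. Your nondegeneracy argument, on the other hand, is exactly what the paper does.
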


For general positive solutions, from the results in  \cite{demak}, we derive the following.
\begin{Corollary}
Let $\Omega\subset \R^2$ be a smooth convex planar domain, $\lambda=0$.
Then, there exists $p_0=p_0(\Omega)>1$ such that for any $p>p_0$ there is   $\sigma=\sigma(\Omega,p)\in(0,1)$ such that, for $s\in(\sigma, 1]$,
the problem \eqref{main} has a unique solution and it is nondegenerate.
\end{Corollary}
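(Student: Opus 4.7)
The plan is to combine Theorem~\ref{thm:scto1} with the uniqueness and nondegeneracy result of \cite{demak} for the local Lane--Emden problem on convex planar domains. More precisely, in \cite{demak} it is shown, via a Morse-index computation that exploits the asymptotic profile of positive solutions as $p\to\infty$, that there exists a threshold $p_0=p_0(\Omega)>1$ such that for every $p>p_0$ the local problem
\[
-\Delta u = u^p \text{ in }\Omega,\qquad u=0 \text{ on }\partial\Omega,\qquad u>0\text{ in }\Omega,
\]
admits a unique positive solution, and this solution is nondegenerate.

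Now I would fix an arbitrary $p>p_0$ and check the hypotheses of Theorem~\ref{thm:scto1} with $\lambda=0$. Since $\lambda=0>-\lambda_1^1(\Omega)$ and in dimension $N=2$ we have $2^\ast-1=\infty$, the subcriticality condition $1<p<2^\ast-1$ is automatic. Hence Theorem~\ref{thm:scto1} applies and produces a threshold $\sigma=\sigma(\Omega,0,p)\in(0,1)$ such that, for every $s\in(\sigma,1]$, the fractional problem \eqref{main} has a unique positive solution and that solution is nondegenerate. This is exactly the statement of the corollary.

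There is no genuine obstacle in the argument itself: the whole nontrivial content is encapsulated in \cite{demak}, and once that input is available the result follows by a direct application of Theorem~\ref{thm:scto1}. The only point to keep track of is the dependence of the thresholds on the data: $p_0$ depends only on $\Omega$, whereas $\sigma$ inherits from Theorem~\ref{thm:scto1} a dependence on $\Omega$ and on the chosen value $p>p_0$. For this reason the final threshold is written as $\sigma=\sigma(\Omega,p)$ in the statement.
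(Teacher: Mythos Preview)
Your proposal is correct and matches the paper's approach: the corollary is obtained by applying Theorem~\ref{thm:scto1} with $\lambda=0$ in dimension $N=2$, using the uniqueness and nondegeneracy result of \cite{demak} for the local problem as input. The paper itself does not give a separate argument beyond noting that the corollaries follow directly from Theorem~\ref{thm:scto1} together with the known local results.
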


Let us stress that convexity of the domain is not necessary in order to have uniqueness. Indeed, for problem \eqref{c:intro} some uniqueness and nondegeneracy  results are available in the case $\lambda=0$ also in non-convex domains. We refer to
\cite{DancerMa2003, DamascelliGrossiPacellaAIHP1999,GrossiADE2000} where   domains which are symmetric and convex with respect to $N$ orthogonal directions are considered. In particular, a pure PDE approach based on the maximum principle in dimension $N=2$ is introduced in \cite{DamascelliGrossiPacellaAIHP1999}, while in \cite{GrossiADE2000} the same result is shown to hold in dimension $N\geq 3$ and for almost critical powers, using an asymptotic analysis of the blow-up behavior of the solutions.  From this discussion we can deduce the following consequence of Theorem \ref{thm:scto1}.
\begin{Corollary}
Let $\lambda=0$ and let $\Omega\subset\mathbb R^N$ be a smooth bounded domain symmetric with respect to the hyperplanes $\{x_i=0\}$ and convex in the direction $x_i$ for all $1\leq i\leq N$. 
If either 
\begin{align*}
    \text{$N=2$ and $p>1$}\qquad \text{ or }\qquad 
    \text{$N\geq 3$ and $p+1\in \left(2^*-\varepsilon,2^*\right)$}
\end{align*}
for $\varepsilon>0$ small enough; then, there is   $\sigma=\sigma(\Omega,p)\in(0,1)$ such that, for $s\in(\sigma, 1]$,
the problem \eqref{main} has a unique solution and it is nondegenerate.
\end{Corollary}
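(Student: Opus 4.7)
The plan is to reduce the statement to a direct application of Theorem~\ref{thm:scto1}. The hypothesis $\lambda>-\lambda_1^1(\Omega)$ is automatic since $\lambda=0$ and $\lambda_1^1(\Omega)>0$. Therefore, once one verifies that the local problem \eqref{c:intro} admits a unique positive solution which is nondegenerate under the present symmetry/convexity assumption on $\Omega$ and the present restrictions on $p$, Theorem~\ref{thm:scto1} immediately produces the desired threshold $\sigma=\sigma(\Omega,p)\in(0,1)$ beyond which \eqref{main} inherits both properties. So the entire work of the corollary consists in quoting the appropriate uniqueness/nondegeneracy statement from the local literature.

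In the planar case $N=2$, $p>1$, the required input is the result of Damascelli, Grossi and Pacella \cite{DamascelliGrossiPacellaAIHP1999}: for any smooth bounded domain $\Omega\subset\R^2$ symmetric and convex with respect to two orthogonal directions, every positive solution of $-\Delta u=u^p$ with Dirichlet boundary condition is symmetric, unique, and nondegenerate. Their proof is a pure PDE argument based on the moving-plane method and a refined maximum-principle analysis of the linearized operator, which is exactly what is needed to feed into Theorem~\ref{thm:scto1}.

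In the higher-dimensional case $N\geq 3$ and $p+1\in(2^*-\varepsilon,2^*)$, the corresponding input is the theorem of Grossi \cite{GrossiADE2000}, which handles almost-critical exponents in symmetric, coordinate-convex domains. The strategy there is asymptotic: as $p+1\uparrow 2^*$, any positive solution concentrates at a single point, and after rescaling it converges to the Aubin--Talenti bubble on $\R^N$; a Lyapunov--Schmidt-type analysis around this profile then yields uniqueness together with nondegeneracy for $p$ sufficiently close to the critical exponent, producing an $\varepsilon=\varepsilon(\Omega)>0$ with the required properties.

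In both regimes, once these local results are in place, applying Theorem~\ref{thm:scto1} delivers the conclusion. There is no substantial obstacle beyond citing the correct statements: the only point deserving a moment of care is to confirm that \cite{DamascelliGrossiPacellaAIHP1999} and \cite{GrossiADE2000} both provide \emph{simultaneously} uniqueness and nondegeneracy, since Theorem~\ref{thm:scto1} requires both hypotheses; this is indeed the explicit content of their main theorems, so the corollary follows without further work.
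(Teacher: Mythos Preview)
The proposal is correct and follows exactly the paper's own approach: the corollary is stated as a direct consequence of Theorem~\ref{thm:scto1} combined with the local uniqueness and nondegeneracy results of \cite{DamascelliGrossiPacellaAIHP1999} (for $N=2$) and \cite{GrossiADE2000} (for $N\geq 3$ near criticality), which is precisely what you do. Your additional remarks on why those references indeed supply both uniqueness \emph{and} nondegeneracy are accurate and appropriate.
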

Uniqueness for \eqref{c:intro} is also known for domains which are a suitable (even non-convex) perturbation of a ball if $\lambda=0$ \cite{ZouPisa1994}, and for suitable large symmetric domains when $\lambda=1$  \cite{DancerRockyMountain1995}.  Nevertheless, to derive the uniqueness for the nonlocal problem via Theorem \ref{thm:scto1}, the nondegeneracy property for the solutions of \eqref{c:intro} is essential, and this is not shown in \cite{ZouPisa1994,DancerRockyMountain1995}. 

If $\O$ is the unit planar square and $\lambda\neq 0$, then uniqueness results for \eqref{c:intro} are known for specific values of $p$.  The case $p=2$ can be found in \cite{McKennaPacellaPlumRothJDE2009} and $p=3$ in \cite{McKennaPacellaPlumRoth2012}. In both papers, the proof relies on a computer-assisted argument. These results cannot be extended directly via Theorem \ref{thm:scto1} to nonlocal problems, since $\Omega$ has some smoothness restrictions, which are used to guarantee boundary regularity of solutions (see Lemmas \ref{threg} and \ref{lem:unif}). We remark that the proofs in \cite{McKennaPacellaPlumRothJDE2009,McKennaPacellaPlumRoth2012} can possibly be adapted to a smooth perturbation of the square. In any case, we believe that the regularity assumptions in $\Omega$ can be weakened, but in order to make the ideas in our arguments more transparent, we do not pursue this here.  

A particularly interesting case regarding uniqueness of solutions is the asymptotically linear problem, \emph{i.e.} when $p$ is close to $1$, since the domain does not play any role. As a matter of fact, in the local case $s=1$, uniqueness and nondegeneracy hold in any bounded domain, see \cite{DamascelliGrossiPacellaAIHP1999, DancerMa2003, LinMM1994}, 
where the proofs exploit the asymptotics of the positive solutions as $p\rightarrow 1$.  Our next result shows that this is also the case in the nonlocal setting, for \emph{any} $s\in(0,1)$. 

\begin{theorem}\label{exit p small} Let $s\in(0,1)$, $N\geq 2$, $\O\subset \R^N$ be a bounded domain of class $C^2$, and $\l>-\l_1^s(\O)$. There is $p_0=p_0(\O, s,\lambda)>1$ such that problem \eqref{main} has a unique solution for every $p\in(1,p_0)$. Moreover, the solution is non-degenerate.
\end{theorem}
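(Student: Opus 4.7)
The plan is to argue by contradiction through the asymptotics $p\to 1^+$: both positive solutions of \eqref{main} (once renormalized by their $L^\infty$-norm) and any nontrivial solution of \eqref{main2} (or any normalized difference of two solutions of \eqref{main}) converge, as $p\to 1^+$, to a multiple of the first Dirichlet eigenfunction $\varphi_1$ of $(-\Delta)^s$ on $\Omega$; the contradiction then follows from an integral identity obtained from the symmetry of the bilinear form associated with $(-\Delta)^s$.

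Fix a sequence $p_n\to 1^+$ and let $u_n$ be a positive solution of \eqref{main} at exponent $p_n$. Setting $M_n=\|u_n\|_\infty$ and $v_n=u_n/M_n$, the rescaled function satisfies $(-\Delta)^s v_n+\lambda v_n=M_n^{p_n-1}v_n^{p_n}$ in $\Omega$, $v_n=0$ in $\mathbb R^N\setminus\Omega$, with $\|v_n\|_\infty=1$. Testing against $\varphi_1$ produces
\begin{equation*}
M_n^{p_n-1}=(\lambda+\lambda_1^s(\Omega))\,\frac{\int_\Omega v_n\varphi_1\,dx}{\int_\Omega v_n^{p_n}\varphi_1\,dx},
\end{equation*}
which, together with $0\le v_n\le 1$ and $\lambda+\lambda_1^s(\Omega)>0$, bounds $M_n^{p_n-1}$ uniformly above and below. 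The uniform $C^{s}(\bar\Omega)$ boundary regularity of $v_n$ (as in Lemma~\ref{lem:unif}) makes $\{v_n\}$ precompact in $C(\bar\Omega)$; along a subsequence, $v_n\to v$ uniformly and $M_n^{p_n-1}\to\mu>0$, where $v\ge 0$, $\|v\|_\infty=1$, solves $(-\Delta)^s v+\lambda v=\mu v$ in $\Omega$ with $v=0$ outside. The strong maximum principle and simplicity of $\lambda_1^s(\Omega)$ force $\mu=\lambda+\lambda_1^s(\Omega)$ and $v=\varphi_1/\|\varphi_1\|_\infty$.

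In order to refute either uniqueness or nondegeneracy, I would work with a sequence $w_n$ satisfying $\|w_n\|_\infty=1$ and $(-\Delta)^s w_n+\lambda w_n=a_n(x)\,w_n$ in $\Omega$, $w_n=0$ outside, with $a_n$ uniformly bounded and $a_n\to\lambda+\lambda_1^s(\Omega)$ locally uniformly in $\Omega$. In the nondegeneracy case $w_n$ is a normalized nontrivial solution of \eqref{main2} at $u=u_n,\,p=p_n$, and $a_n=p_n u_n^{p_n-1}$. In the non-uniqueness case, with two distinct positive solutions $u_n\neq\tilde u_n$ at $p_n$, set $w_n=(u_n-\tilde u_n)/\|u_n-\tilde u_n\|_\infty$ and $a_n=(u_n^{p_n}-\tilde u_n^{p_n})/(u_n-\tilde u_n)=p_n\,\xi_n^{p_n-1}$ for some $\xi_n(x)$ between $u_n(x)$ and $\tilde u_n(x)$. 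Applying the previous paragraph to $u_n$ (and to $\tilde u_n$) identifies the limit of $a_n$; the same boundary regularity applies to $w_n$, yielding $w_n\to w$ uniformly on $\bar\Omega$ along a subsequence, with $\|w\|_\infty=1$ and $(-\Delta)^s w=\lambda_1^s(\Omega)\,w$ in $\Omega$, $w=0$ outside. By simplicity, $w=c\,\varphi_1$ with $c\ne 0$.

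The contradiction exploits the bilinear symmetry $\int_\Omega\varphi\,(-\Delta)^s\psi\,dx=\int_\Omega\psi\,(-\Delta)^s\varphi\,dx$ on the fractional Dirichlet space. In the nondegeneracy case, testing \eqref{main2} against $u_n$ and \eqref{main} against $w_n$ and subtracting gives $(p_n-1)\int_\Omega u_n^{p_n}w_n\,dx=0$, hence $\int_\Omega u_n^{p_n}w_n\,dx=0$; rescaling by $M_n^{p_n}$ and passing to the limit yields (up to a positive constant) $c\int_\Omega\varphi_1^{2}\,dx=0$, so $c=0$, contradicting $\|w\|_\infty=1$. For uniqueness, testing the equation for $u_n$ against $\tilde u_n$ and vice versa, applying the symmetry, and subtracting gives $\int_\Omega u_n\tilde u_n^{p_n}\,dx=\int_\Omega \tilde u_n u_n^{p_n}\,dx$; rearranging and dividing by $\|u_n-\tilde u_n\|_\infty$ yields
\begin{equation*}
\int_\Omega u_n\,\tilde u_n\,\frac{u_n^{p_n-1}-\tilde u_n^{p_n-1}}{u_n-\tilde u_n}\,w_n\,dx=0,
\end{equation*}
in which the factor $u_n\tilde u_n\,(u_n^{p_n-1}-\tilde u_n^{p_n-1})/(u_n-\tilde u_n)$ is strictly positive in $\Omega$ for $p_n>1$. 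Since $w_n\to c\varphi_1$ uniformly, $w_n$ has the sign of $c$ on every compact subset of $\Omega$ for large $n$, while the boundary contribution is controlled by the vanishing of $u_n\tilde u_n$ on $\partial\Omega$; the identity therefore forces $c=0$, again contradicting $\|w\|_\infty=1$. The main technical obstacle will be the uniform boundary regularity of $v_n$ and $w_n$ at fixed $s\in(0,1)$ as $p_n\to 1^+$, together with the careful tracking of the possibly divergent or vanishing scales $M_n,\tilde M_n$ in the limit of the uniqueness identity; once $C(\bar\Omega)$-precompactness and the correct normalization are in place, eigenvalue rigidity and the symmetry of $(-\Delta)^s$ close the argument.
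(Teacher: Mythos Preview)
Your overall strategy (renormalize, pass to the limit, identify the eigenfunction, derive a contradiction from a bilinear identity) matches the paper's. The nondegeneracy half is essentially the paper's argument. However, there are two genuine gaps.

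\textbf{The upper bound on $M_n^{p_n-1}$ is not established.} From the identity
\[
M_n^{p_n-1}=(\lambda+\lambda_1^s(\Omega))\,\frac{\int_\Omega v_n\varphi_1}{\int_\Omega v_n^{p_n}\varphi_1}
\]
and $0\le v_n\le 1$ you only get $v_n^{p_n}\le v_n$, hence the \emph{lower} bound $M_n^{p_n-1}\ge \lambda+\lambda_1^s(\Omega)$. Nothing here prevents $\int v_n^{p_n}\varphi_1 / \int v_n\varphi_1 \to 0$ (for instance, Jensen's inequality only yields a bound on $(\int u_n\varphi_1)^{p_n-1}$, not on $\|u_n\|_\infty^{p_n-1}$). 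Moreover, your subsequent appeal to $C^s(\bar\Omega)$ regularity of $v_n$ to get precompactness is circular: the right-hand side $M_n^{p_n-1}v_n^{p_n}-\lambda v_n$ is bounded only \emph{after} you know $M_n^{p_n-1}$ is bounded. In the paper this step is the heart of Lemma~\ref{pclose1}: one assumes $M_n^{p_n-1}\to\infty$, rescales $u_n$ by $\mu_n=M_n^{(1-p_n)/2s}$, and obtains in the limit a bounded positive solution of $(-\Delta)^s w=w$ either in $\R^N$ or in a half-space, which is excluded by the linear Liouville theorems (Corollaries~\ref{liouv1}--\ref{liouv2}). This blow-up step is not optional with your approach.

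\textbf{The uniqueness endgame is incomplete.} Your identity
\[
\int_\Omega u_n\tilde u_n\,\frac{u_n^{p_n-1}-\tilde u_n^{p_n-1}}{u_n-\tilde u_n}\,w_n\,dx=0
\]
has a positive weight, but that weight is of order $(p_n-1)\,\xi_n^{p_n-2}$ and tends to $0$ as $p_n\to1$; after dividing by $(p_n-1)$ the factor $\xi_n^{p_n-2}$ blows up near $\partial\Omega$, and ``controlled by the vanishing of $u_n\tilde u_n$ on $\partial\Omega$'' is not a proof. You also have no control on $M_n/\tilde M_n$ (only on $(M_n/\tilde M_n)^{p_n-1}$), so normalizing the weight is delicate. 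The paper uses the same bilinear identity only to conclude that $w_n$ \emph{changes sign}, and then closes the argument with the fractional Hopf lemma: from Lemma~\ref{threg} one has $w_n/\delta^s\to \varphi_1^s/\delta^s$ in $C^\beta(\bar\Omega)$, the negative-minimum points $x_n$ of $w_n$ drift to $\partial\Omega$, and $\varphi_1^s/\delta^s>0$ on $\partial\Omega$ gives the contradiction. Your sketch omits this boundary mechanism, and without it the integral identity alone does not yield $c=0$.
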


As we mentioned before, showing uniqueness results in the nonlocal case is particularly difficult and challenging, even in seemingly simple cases.  For instance, let $\lambda=0$ and let $\Omega$ be a ball. Then, a moving plane argument shows that all the solutions of \eqref{main} are radially symmetric (see, \emph{e.g.}, \cite{JW14}); however, a general uniqueness result for this problem with $s\in(0,1)$ is still open and Corollary \ref{corollary:ball} and Theorem \ref{exit p small} are the first  results in this direction.  The main obstacle when trying to extend the methods used  in the local case to the fractional regime is the lack of essential tools such as shooting methods and other ODE techniques.  Moreover, Courant's nodal theorems, Hopf Lemmas for sign-changing solutions, and monotonicity formulas in bounded domains are also not available, which complicates the analysis of the linearized associated problem to understand the nondegeneracy properties of solutions.  Our results provide a first answer to these questions, showing the uniqueness and nondegeneracy of solutions for either $s$ close to 1 or $p$ close to 1. 

The proofs of Theorems \ref{thm:scto1} and \ref{exit p small} rely on a delicate asymptotic analysis that involves several elements such as interior and boundary regularity  estimates, precise a priori bounds which are uniform in $p$ and $s$ (see Remark \ref{rmk:c}), Hopf Lemmas for positive solutions, Liouville type theorems for linear and nonlinear nonlocal equations in unbounded domains, and the known results for the local problem ($s=1$) and for the linear equation ($p=1$). A similar approach is used in \cite{FV14}, where an asymptotic analysis as $s\rightarrow 1$ together with the Caffarelli-Silvestre extension is used to characterize the uniqueness and nondegeneracy of ground state solutions of a fractional Schrödinger equation in $\R^N$. We emphasize that our proofs do not require extension operators. In particular, we believe that our proofs can be adapted to consider more general nonlocal operators as well.  Furthermore, we think that Theorems \ref{thm:scto1} and \ref{exit p small} could be a first step towards a more complete answer on the uniqueness properties of solutions for all $s\in(0,1)$ and general values of $p$, by using the implicit function theorem (as in \cite[Theorem 4.4]{DamascelliGrossiPacellaAIHP1999}, since the $p$ dependence on $\sigma$ in Theorem~\ref{thm:scto1} can be relaxed, see Remark \ref{rmk:p:dep}) and by showing the nondegeneracy of solutions in this range.

Finally, a natural question is if a similar uniqueness result holds for more general nonlinearities. We emphasize that our proofs use different blow up arguments that take advantage on the particular homogeneity of the power nonlinearity. However, it should be noted that uniqueness may not hold for general superlinear nonlinearities; for instance, multiplicity of positive solutions is known in the case of concave-convex nonlinearities (in any bounded domain), see, for instance, \cite{BarriosColoradoServadeiSoriaHIHP2015}. On the other hand, general uniqueness results for fractional sublinear type problems are available, see \emph{e.g.} \cite[Theorem 6.1 \& Corollary 6.3]{BFSST18}, and for large solutions (blowing-up at the boundary), see for example \cite{abatangelo2015large,MR4062980}.

The organization of the paper is as follows.  In Section \ref{sec:2} we detail our functional setting, include some interior and boundary regularity results, and show some Liouville-type results. In Section \ref{sec:pcto1} we present the proof of Theorem~\ref{exit p small} and the proof of Theorem~\ref{thm:scto1} can be found in Section \ref{sec:l}.

\section{Preliminaries}\label{sec:2}

\subsection{Functional setting and notation}

In this section we introduce some notations and preliminary results needed throughout this paper.

Let $N\geq 2$ and let $\Omega\subset \R^N$ be a bounded domain of class $C^2$. For $p\in[1,\infty]$, we use $\|\cdot\|_{L^p(\Omega)}$ to denote the norm in $L^p(\Omega)$, namely,
\begin{align*}
    \|u\|_{L^p(\Omega)}:=\left(\int_{\Omega}|u|^p\, dx\right)^\frac{1}{p}\quad \text{ for }p\in[1,\infty),\qquad 
    \|u\|_{L^\infty(\Omega)}:=\sup_{\Omega}|u|.
\end{align*}
We sometimes omit $\Omega$ if $\Omega=\R^N$, namely, $\|u\|_{L^p}:=\|u\|_{L^p(\R^N)}$.

Let $s\in (0,1)$, we define $H^{s}(\ren)$ the classical fractional Sobolev space,
\begin{align}\label{}
H^s(\ren)=\left\lbrace u \in
L^2(\ren):\frac{|u(x)-u(y)|}{|x-y|^{\frac{d}{2}+s}} \in
L^2(\ren\times\ren )\right\rbrace ,
\end{align}
endowed with the norm
\begin{align*}
\|u\|^2_{s}:=
\int_{\R^N}|u|^2\, dx + \dfrac{c_{N,s}}{2}\iint_{\ren\times \ren}\dfrac{|u(x)-u(y)|^{2}}{|x-y|^{N+2s}}dx\,dy.
\end{align*}
For a given bounded domain $\Omega\subset \R^N$, let
$$
\cH^s_0(\Omega)=\left\{ u \in H^{s}(\ren):\, \, u=0 \, \text{ in } \ren\backslash \Omega \right\},
$$
Notice that $\cH^s_0(\Omega)$ is a Hilbert space with the norm $\|\,\cdot\,\|_s$.  Moreover, for all $u,\,v\in\cH^s_0(\Omega)$,
\begin{align}\label{bypart}
\int_\Omega(-\Delta)^s uv\, dx =\frac{c_{N,s}}{2} \iint_{\ren\times\ren}{\frac{(u(x)-u(y))(v(x)-v(y))}{|x-y|^{N+2s}}\,dx\,dy}.
\end{align}

We also use $H^1_0(\Omega)$ to denote the usual Sobolev space of weakly differentiable functions with zero trace. 

For $m\in \N_0$, $\sigma\in{(0,1]}$, $s=m+\sigma$, we write $C^s(\Omega)$ (resp. $C^s(\overline{\Omega})$) to denote the space of $m$-times continuously differentiable functions in $\Omega$ (resp. $\overline{\Omega}$) whose derivatives of order $m$ are locally $\sigma$-H\"older continuous in $\Omega$ {(or Lipschitz continuous if $\sigma=1$)}.  We use $[\,\cdot\,]_{C^\sigma(\Omega)}$ for the Hölder seminorm in a domain $\Omega$, namely, 
\begin{align*}
        [u]_{C^\sigma(\Omega)}:=\sup_{\substack{x,y\in \Omega\\x\neq y}}\frac{|u(x)-u(y)|}{|x-y|^\sigma}
\end{align*}
and $\|u\|_{C^s(\Omega)}:= \sum_{|\alpha|\leq m}\|\partial^\alpha u\|_{L^\infty(\Omega)}+\sup_{|\alpha|=m}[\partial^\alpha u]_{C^\sigma(\Omega)}$ is the usual Hölder norm in $C^s(\Omega)$.

\subsection{Regularity results}
We say that $u \in \cH^s_0(\O)$ is a weak solution of $(-\Delta)^s u=f$ in $\Omega$ and $u=0$ in $\R^N\backslash\Omega$ if
\begin{align}\label{eng sol}
\frac{c_{N,s}}{2}\iint_{\ren\times\ren}\frac{ \big(u(x)-u(y)\big)		\big(\varphi(x)-\varphi(y)\big)}{|x-y|^{N+2s}}\,dx\,dy=\int_\O f\varphi
\quad \text{ for all } \varphi \in \cH^s_0(\Omega).
\end{align}

\begin{remark}
A standard Moser iteration and bootstrap argument (see, for instance, \cite[Proposition 8.1]{ROSV17}) readily implies that a weak solution $u$ of \eqref{main} and a weak solution $v$ of \eqref{main2} are smooth in the sense that $u,v\in C^\infty(\Omega)\cap L^\infty(\Omega)\cap C^s(\R^N)$.
\end{remark}

In our proofs via asymptotic methods as $p$ or as $s$ goes to 1, it is crucial to have uniform estimates (sometimes in $p$ and sometimes in $s$) on solutions.  However, many classical regularity estimates do not emphasize explicitly the dependencies of the constants, which is not always straightforward (see Remark \ref{rmk:c}). In this section, for completeness, we revisit some classical regularity estimates to remark the dependencies of the constants involved in some known inequalities.

\begin{Lemma}\label{threg}
Let $\Omega\subset \R^N$ be a bounded domain of class $C^2$ and let $u$ be a solution of $(-\Delta)^s u =g$ in $\Omega$ with $g\in L^\infty(\Omega)$ and $u=0$ in $\R^N\backslash \Omega$. There is $C=C(\O)>0$ such that
\begin{align}\label{boundUpToBoundary}
\|u\|_{C^s(\overline\O)}\leq C  \|g\|_{L^\infty(\Omega)},
\end{align}
Furthermore, $\frac{u}{\delta^s}\in C^{\alpha}(\bar\Omega)$ for some $\alpha\in (0,1)$ and \begin{align*}
\left\|\frac{u}{\delta^s}\right\|_{C^\alpha(\overline\O)}\leq C_1 \|g\|_{L^\infty(\Omega)},
\end{align*}
for some constant $C_1=C_1(\O,s)>0$.
\end{Lemma}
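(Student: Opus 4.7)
My plan is to reduce both estimates to known interior and boundary Hölder regularity results for the fractional Laplacian due to Ros-Oton and Serra, paying particular attention to the dependence of the constants on $s$.

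First I would establish an $L^\infty$ bound of the form $\|u\|_{L^\infty(\R^N)}\leq C(\O)\|g\|_{L^\infty(\O)}$ with a constant uniform in $s$ in any compact subset of $(0,1)$ (which is the regime of interest in Section~\ref{sec:l}, where $s\to 1^-$). Fix $R>0$ so that $\O\subset B_R(0)$. Using the explicit formula $(-\Delta)^s (R^2-|x|^2)_+^s=\kappa_{N,s}$ in $B_R$, with $\kappa_{N,s}=2^{2s}\Gamma(1+s)\Gamma(\tfrac{N}{2}+s)/\Gamma(\tfrac{N}{2})$, the function $\tau_s:=(R^2-|x|^2)_+^s/\kappa_{N,s}$ solves $(-\Delta)^s\tau_s=1$ in $B_R$ with $\tau_s=0$ outside. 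Since $\kappa_{N,s}$ stays bounded below by a positive constant on compact subsets of $(0,1)$, $\|\tau_s\|_{L^\infty}\le C(\O)$, and comparing $u$ with $\pm\|g\|_{L^\infty(\O)}\tau_s$ via the fractional maximum principle yields the desired $L^\infty$ bound.

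For the $C^s$ estimate up to the boundary I would invoke the global Hölder regularity estimate of Ros-Oton and Serra \cite{RosOtonSerraARMA2014}, which asserts that $\|u\|_{C^s(\R^N)}\le C(\|g\|_{L^\infty(\O)}+\|u\|_{L^\infty(\O)})$ for any weak solution of $(-\Delta)^s u=g$ in $\O$ with $u=0$ in $\R^N\backslash\O$; combined with the $L^\infty$ bound above, this yields the first estimate of the lemma. For the second estimate, I would apply directly the sharp boundary regularity theorem of \cite{RosOtonSerraARMA2014}, which guarantees $u/\delta^s\in C^\alpha(\overline{\O})$ with $\|u/\delta^s\|_{C^\alpha(\overline\O)}\le C_1(\|g\|_{L^\infty(\O)}+\|u\|_{L^\infty(\O)})$ for some $\alpha\in(0,1)$ and a constant $C_1=C_1(\O,s)$; the $L^\infty$ bound then closes the argument.

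The main obstacle is verifying that the constant $C$ in the first estimate does not blow up as $s$ varies, since the classical statements usually do not emphasize this dependence. This uniformity is essential in the asymptotic analysis as $s\to 1^-$ carried out in the proof of Theorem~\ref{thm:scto1}, and it is precisely the point flagged in Remark~\ref{rmk:c}. A careful reading of the Ros-Oton-Serra proof, combined with the explicit torsion barrier above, is what makes this uniformity transparent for $s$ in a compact subset of $(0,1)$.
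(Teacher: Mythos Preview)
Your approach is essentially the same as the paper's: both reduce the lemma to the Ros-Oton--Serra regularity theory and then address the $s$-independence of the first constant separately. Two points are worth flagging. First, the reference you cite, \cite{RosOtonSerraARMA2014}, is the Pohozaev identity paper; the regularity statements you need (global $C^s$ estimate and $u/\delta^s\in C^\alpha(\overline\Omega)$) are in \cite{reguros} (Proposition~1.1, Theorem~1.2, Proposition~1.4). Second, for the uniformity in $s$ of the constant $C$ in \eqref{boundUpToBoundary}, the paper does not re-read the Ros-Oton--Serra argument but instead invokes \cite[Lemma~3.4]{JSW20} directly, where this uniformity is already worked out. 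Your torsion-barrier argument for the $L^\infty$ bound is fine and in the same spirit, but appealing to \cite{JSW20} is shorter and avoids having to track constants through the full boundary-regularity proof.
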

\begin{proof}
The interior and boundary regularity with follows from \cite[Proposition 1.1,  Theorem 1.2 \& Proposition 1.4]{reguros}.  The fact that the constant $C=C(\Omega)>0$ is independent of $s$, follows from \cite[Lemma 3.4]{JSW20}.
\end{proof}

\begin{Lemma}\label{lemma:interiorRegularity1}
Let $\text{$u\in C^{2s}(B_{3R})$}\cap C(\R^N)\cap L^\infty(\R^N)$ with compact support in $\R^N$, then for any $\alpha\in (0,2s)$ there exists a constant $C:=C(s,d,\alpha, R)>0$ such that
\[\|u\|_{C^\alpha(B_R)}\leq C\left( \|u\|_{L^{\infty}(\R^N)}+ \|(-\Delta)^s u\|_{L^{\infty}(B_{2R})} \right).\]
\end{Lemma}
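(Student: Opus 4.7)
The plan is to reduce the statement to a classical interior $C^\alpha$-regularity estimate for the fractional Laplacian with bounded right-hand side, via a cutoff-and-split argument that handles the nonlocal tail separately. This strategy avoids any extension procedure, which is consistent with the paper's overall philosophy.

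First I would fix a cutoff $\eta \in C_c^\infty(\R^N)$ with $\eta \equiv 1$ on $B_{5R/2}$ and $\eta \equiv 0$ outside $B_{3R}$, and decompose $u = u_1 + u_2$ with $u_1 := \eta u$ and $u_2 := (1-\eta) u$. Then $u_1 \in L^\infty(\R^N)$ has compact support in $B_{3R}$, while $u_2 \equiv 0$ on $B_{5R/2}$. For any $x \in B_{2R}$ the singular part of the kernel is absent from the integral defining $(-\Delta)^s u_2(x)$, since $u_2$ vanishes in a neighborhood of $x$. Using $|x-y|\geq R/2$ on $\sop u_2$ and the fact that $u$ is bounded with compact support, the tail obeys the pointwise bound
\[
|(-\Delta)^s u_2(x)| \leq c_{N,s}\,\|u\|_{L^\infty(\R^N)} \int_{\sop u \setminus B_{5R/2}(0)} \frac{dy}{|x-y|^{N+2s}} \leq C(s,N,R)\,\|u\|_{L^\infty(\R^N)}.
\]
Therefore, in $B_{2R}$ one has $(-\Delta)^s u_1 = (-\Delta)^s u - (-\Delta)^s u_2 \in L^\infty(B_{2R})$ with
\[
\|(-\Delta)^s u_1\|_{L^\infty(B_{2R})} \leq \|(-\Delta)^s u\|_{L^\infty(B_{2R})} + C(s,N,R)\,\|u\|_{L^\infty(\R^N)}.
\]

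Next, since $u_1$ is bounded with compact support and $(-\Delta)^s u_1$ is bounded on $B_{2R}$, I would invoke a standard interior $C^\alpha$-regularity result for the fractional Laplacian (available, for instance, in the work of Silvestre or Ros-Oton--Serra), which produces, for any $\alpha \in (0,2s)$, a constant $C = C(s,N,\alpha,R)$ such that
\[
\|u_1\|_{C^\alpha(B_R)} \leq C\bigl(\|u_1\|_{L^\infty(\R^N)} + \|(-\Delta)^s u_1\|_{L^\infty(B_{2R})}\bigr).
\]
Because $\eta \equiv 1$ on $B_R$, $u = u_1$ there, so combining the two estimates and using $\|u_1\|_{L^\infty(\R^N)} \leq \|u\|_{L^\infty(\R^N)}$ yields the asserted inequality.

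The main obstacle is the interior regularity ingredient itself, which is where the genuinely nonlocal analysis enters; the localization step is purely kinematic. Tracking the dependence of $C$ on $s$, $N$, $\alpha$, $R$ is mechanical, and since the constant is \emph{allowed} to depend on all these parameters (in particular there is no need for uniformity as $s \to 0$ or $s \to 1$, unlike in Lemma \ref{threg}), no delicate estimates on the normalization $c_{N,s}$ are required here.
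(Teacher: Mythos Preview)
Your approach is correct but differs from the paper's in two ways. First, your cutoff step is redundant: since $u$ already has compact support by hypothesis, and the interior estimate you cite already carries $\|u\|_{L^\infty(\R^N)}$ on the right-hand side (which absorbs any tail contribution), there is nothing to be gained by splitting off $u_2$. You could apply the interior estimate directly to $u$ itself and be done.

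Second, and more substantively, the paper takes a different route to the same black box. Rather than cutting off, it \emph{mollifies}: with $f := (-\Delta)^s u$ on $B_{2R}$ one has $(-\Delta)^s(u*\eta_\eps) = f*\eta_\eps$ on a slightly smaller ball, and then \cite[Proposition~2.3]{reguros} applies to the smooth function $u*\eta_\eps$ with an exponent $\alpha'\in(\alpha,2s)$; passing to the limit $\eps\to 0$ gives the claim. The mollification is there because that proposition is stated for smooth $u$, so one regularizes to meet the hypothesis and then takes the limit. Your argument implicitly assumes the interior estimate applies directly to a function that is only $C^{2s}$; if the version you invoke (e.g.\ one phrased for viscosity or weak solutions) does not require a priori smoothness this is fine and your route is marginally shorter, but if you are citing the same Ros-Oton--Serra proposition you would need to insert the mollification step as well.
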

\begin{proof}Let $\alpha'\in(\alpha, 2s)$, $f=(-\Delta)^s u$ in $B_{2R}$, $\eta_{\epsilon}$ be the standard mollifier. Then  $(-\Delta)^s (u\ast \eta_{\epsilon}) =f\ast \eta_{\epsilon}$ in $B_{2R'}$ for $R'=R+\epsilon/2$ and $\epsilon$ small. Thus, by \cite[Proposition 2.3]{reguros},
\[\|u\ast \eta_{\epsilon}\|_{C^{\alpha'}(B_{R'})}\leq C\left( \|u\ast \eta_{\epsilon}\|_{L^{\infty}(\R^N)}+ \|f\ast \eta_{\epsilon}\|_{L^{\infty}(B_{2R'})} \right).\]
The claim now follows passing to the limit as $\eps\to 0$.
\end{proof}
\begin{Lemma}\label{lemma:interiorRegularity2}
Let $\alpha\in (0,2s)$, $u\in C^{2s+\alpha}(B_{3R})\cap C(\R^N)\cap L^\infty(\R^N)$ with compact support in $\R^N$, then there exists a constant $C:=C(s,d,\alpha, R)>0$ such that

\[
\|u\|_{C^{\alpha+2s}(B_R)}\leq C\left( \|u\|_{L^{\infty}(\R^N)}+ \|(-\Delta)^s u\|_{C^{\alpha}(B_{2R})} \right).
\]
\end{Lemma}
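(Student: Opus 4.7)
The plan is to mirror the strategy used in the proof of \cref{lemma:interiorRegularity1}: mollify $u$, apply a higher-order interior Schauder-type estimate for $(-\Delta)^s$, and pass to the limit. Fix $\varepsilon>0$ small, let $\eta_\varepsilon$ be the standard mollifier, and set $u_\varepsilon:=u\ast\eta_\varepsilon$, $f:=(-\Delta)^s u$ on $B_{2R}$, $f_\varepsilon:=f\ast\eta_\varepsilon$, and $R':=R+\varepsilon/2$. Since $u$ is continuous and compactly supported, $u_\varepsilon\in C^{\infty}_c(\R^N)$, and because the principal-value integral defining $(-\Delta)^s$ converges absolutely on smooth rapidly decaying functions, Fubini gives the pointwise identity $(-\Delta)^s u_\varepsilon=f_\varepsilon$ on $B_{2R'}$ for $\varepsilon$ small.

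Next, I would invoke a $C^{\alpha+2s}$ interior Schauder estimate for the fractional Laplacian applied to the smooth function $u_\varepsilon$ on $B_{2R'}$, in the spirit of the pointwise estimates in \cite[Section~2]{reguros}, to obtain
\[
\|u_\varepsilon\|_{C^{\alpha+2s}(B_{R'})}\leq C\bigl(\|u_\varepsilon\|_{C^\alpha(B_{2R'})}+\|f_\varepsilon\|_{C^\alpha(B_{2R'})}\bigr),
\]
with $C=C(s,d,\alpha,R)$. I would then apply \cref{lemma:interiorRegularity1} to $u_\varepsilon$ on the slightly larger ball $B_{2R'}\subset B_{3R}$ to control $\|u_\varepsilon\|_{C^\alpha(B_{2R'})}$ by $\|u_\varepsilon\|_{L^\infty(\R^N)}+\|f_\varepsilon\|_{L^\infty(B_{3R})}$, which, combined with the previous inequality, yields
\[
\|u_\varepsilon\|_{C^{\alpha+2s}(B_{R'})}\leq C\bigl(\|u_\varepsilon\|_{L^\infty(\R^N)}+\|f_\varepsilon\|_{C^\alpha(B_{2R'})}\bigr).
\]
Finally, I would pass to the limit $\varepsilon\to 0^+$, using that $u_\varepsilon\to u$ uniformly on $\R^N$ and $f_\varepsilon\to f$ in $C^\alpha_{\mathrm{loc}}(B_{2R})$ (by classical mollification properties for Hölder-continuous data), to recover the desired estimate on $B_R\subset B_{R'}$.

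The main technical difficulty is citing the higher-order Schauder estimate in a form matching the exponents $\alpha$ and $\alpha+2s$ here, since the classical statements in \cite{reguros} typically exclude the borderline cases in which $\alpha+2s$ coincides with a positive integer. If such a borderline case occurs, I would fix $\alpha'\in(\alpha,2s)$ with $\alpha'+2s\notin\mathbb N$, apply the estimate at exponent $\alpha'$ to obtain a $C^{\alpha'+2s}$ bound (which controls $\|f_\varepsilon\|_{C^{\alpha'}(B_{2R'})}$ by $\|f_\varepsilon\|_{C^\alpha(B_{2R'})}$ via the bounded-diameter inclusion of Hölder spaces applied to $f_\varepsilon$ extended smoothly), and then embed $C^{\alpha'+2s}(B_R)\hookrightarrow C^{\alpha+2s}(B_R)$. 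A secondary, routine point is the justification of the identity $(-\Delta)^s(u\ast\eta_\varepsilon)=((-\Delta)^s u)\ast\eta_\varepsilon$ on $B_{2R'}$, which follows from Fubini since $u_\varepsilon$ is smooth with compact support, so the singular integral is absolutely convergent and may be exchanged with the convolution.
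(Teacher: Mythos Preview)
Your proposal is correct and follows essentially the same approach as the paper, whose proof is the one-liner ``Similar to the proof of Lemma~\ref{lemma:interiorRegularity1}, but using \cite[Proposition 2.2]{reguros}''. Your mollify/apply-Schauder/pass-to-the-limit scheme is precisely this strategy fleshed out, with the additional (and natural) step of invoking Lemma~\ref{lemma:interiorRegularity1} to reduce the intermediate $C^\alpha$ control on $u_\varepsilon$ to an $L^\infty$ bound.
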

\begin{proof}
Similar to the proof of Lemma~\ref{lemma:interiorRegularity1}, but using \cite[Proposition 2.2]{reguros}.
\end{proof}

\begin{Lemma}\label{lem:int:reg}
Let $r>0$, $s\in(0,1)$, and $u\in L^\infty(\R^N)$. There is $C=C(N)$ such that
\begin{align*}
    [u]_{C^s(B_{r/2})}
    \leq Cr^s\left( \|(-\Delta)^s u\|_{L^\infty(B_r)}+\tau_{N,s}\int_{\R^N\backslash B_r}\frac{|u(z)|}{|z|^N(|z|^2-r^2)^s}\, dz \right),
\end{align*}
where $\tau_{N,s}=\frac{2}{\Gamma(s)\Gamma(1-s)|S^{N-1}|}$.
\end{Lemma}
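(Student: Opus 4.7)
The plan is to use the classical decomposition $u = v + w$ on $\R^N$, where $w$ absorbs the interior source $f := (-\Delta)^s u \in L^\infty(B_r)$ and $v := u - w$ is the $s$-harmonic extension of $u|_{\R^N\setminus B_r}$ to $B_r$. Precisely, let $w$ be the unique bounded weak solution of $(-\Delta)^s w = f$ in $B_r$ with $w \equiv 0$ on $\R^N \setminus B_r$. Then $v = u - w$ is bounded in $\R^N$, coincides with $u$ outside $B_r$, and satisfies $(-\Delta)^s v = 0$ in $B_r$. Since $B_{r/2} \subset B_r$, it suffices to estimate $[v]_{C^s(B_{r/2})}$ and $[w]_{C^s(B_r)}$ separately.

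For $w$ I would rescale to the unit ball and invoke the boundary regularity in Lemma~\ref{threg}. Setting $\tilde w(y) := w(ry)$, a direct computation gives $(-\Delta)^s \tilde w(y) = r^{2s} f(ry)$ in $B_1$ with $\tilde w = 0$ outside $B_1$, so Lemma~\ref{threg} with $\Omega = B_1$ yields $\|\tilde w\|_{C^s(\overline{B_1})} \leq C(N)\, r^{2s}\, \|f\|_{L^\infty(B_r)}$. Since the Hölder seminorm scales as $[\tilde w]_{C^s(B_1)} = r^s\, [w]_{C^s(B_r)}$, one gets $[w]_{C^s(B_r)} \leq C(N)\, r^s\, \|f\|_{L^\infty(B_r)}$, which already produces the first term of the desired inequality.

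For $v$ I would use the Poisson representation for $s$-harmonic functions on a ball,
\begin{align*}
    v(x) = \tau_{N,s} \int_{\R^N \setminus B_r} \left(\frac{r^2 - |x|^2}{|z|^2 - r^2}\right)^s \frac{u(z)}{|x-z|^N}\, dz, \qquad x \in B_r,
\end{align*}
where $\tau_{N,s}$ is exactly the Poisson kernel normalization (which one identifies via Euler's reflection $\sin(\pi s)/\pi = 1/(\Gamma(s)\Gamma(1-s))$ and $|S^{N-1}| = 2\pi^{N/2}/\Gamma(N/2)$); the integral converges absolutely because $u \in L^\infty(\R^N)$. For $x \in B_{r/2}$ and $z \in \R^N \setminus B_r$ the uniform geometric comparisons $r^2 - |x|^2 \geq 3r^2/4$ and $|z|/2 \leq |x-z| \leq 3|z|/2$ let me differentiate under the integral sign and bound the kernel gradient by
\begin{align*}
    |\nabla_x P_r(x,z)| \leq C(N)\, \tau_{N,s}\, \frac{r^{2s-1}}{|z|^N\, (|z|^2 - r^2)^s}.
\end{align*}
Integrating yields $\|\nabla v\|_{L^\infty(B_{r/2})} \leq C(N)\, \tau_{N,s}\, r^{2s-1}\, \mathcal{I}(u)$, where $\mathcal{I}(u)$ is the weighted exterior integral in the statement. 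Since $B_{r/2}$ is convex with diameter $r$, the mean value theorem combined with $|x-y|^{1-s} \leq r^{1-s}$ for $x, y \in B_{r/2}$ converts this into $[v]_{C^s(B_{r/2})} \leq C(N)\, \tau_{N,s}\, r^s\, \mathcal{I}(u)$, producing the second term and concluding the proof after adding the two bounds.

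The main technical obstacle I expect is keeping the overall constant truly equal to $C = C(N)$, with no dependence on $s$, as the rest of the paper requires. This demands that the boundary regularity constant of Lemma~\ref{threg} be uniform in $s$ (which holds by \cite[Lemma 3.4]{JSW20}) and that every $s$-dependent prefactor arising when differentiating $(r^2-|x|^2)^s$ and $|x-z|^{-N}$ — typified by $s(3/4)^{s-1}$, $(3/4)^s$, $2^s$, and the factor $r^{1-s}$ used to upgrade the Lipschitz bound on $v$ to a Hölder bound — combine without generating any quantity that blows up as $s \to 0^+$ or $s \to 1^-$. Careful bookkeeping of these prefactors is essentially the only non-routine aspect of the argument.
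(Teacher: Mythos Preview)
The paper does not actually prove this lemma; it simply cites \cite[Lemma~A.1]{JSW20}. Your proposal, by contrast, supplies a genuine self-contained argument via the standard splitting $u=v+w$ (Dirichlet solution plus $s$-harmonic Poisson extension), and the computations you outline are correct: the scaling of Lemma~\ref{threg} handles $w$, and the explicit Poisson kernel with $\tau_{N,s}$ as normalization, differentiated on $B_{r/2}$ and upgraded from Lipschitz to $C^s$ via $|x-y|^{1-s}\le r^{1-s}$, handles $v$. Your bookkeeping of the $s$-dependent prefactors ($s(3/4)^{s-1}$, $2^N$, etc.) is accurate and indeed stays bounded uniformly on $s\in(0,1)$.

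One point worth flagging: you invoke Lemma~\ref{threg} for the $w$-estimate, and the $s$-independence of its constant is itself attributed to \cite[Lemma~3.4]{JSW20}. Since the statement you are proving is \cite[Lemma~A.1]{JSW20}, you should make sure that in \cite{JSW20} the dependency does not run the other way (appendix results are often inputs to the main text). Within the internal logic of \emph{this} paper there is no circularity---Lemma~\ref{threg} is already on the shelf---but if you intend your write-up to be fully independent of \cite{JSW20}, you may prefer to bound $[w]_{C^s(B_r)}$ directly via the explicit torsion barrier $(r^2-|x|^2)^s_+$ (whose constant $a_{N,s}$ is manifestly bounded in $s$) together with a comparison argument, rather than quoting Lemma~\ref{threg}.
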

\begin{proof}
The proof can be found in \cite[Lemma A.1]{JSW20}.
\end{proof}

The following Lemma is an easy consequence of the barrier constructed in \cite[Lemma 2.6]{reguros}.  However, we use this result in a blow up argument as $s\to 1^-$ and as the domain grows; therefore, we require that the constants involved in the construction of the barrier do not degenerate in the limit.  We include a proof for completeness. 

\begin{Lemma}\label{lem:unif}
Let $\Omega\subset \R^N$ be a (possibly unbounded) domain satisfying a uniform sphere condition, namely, there is $r_0>0$ such that, for every $x_0\in \partial \Omega$, there is $y_0\in \R^N\backslash \Omega$ with $\overline{B_{r_0}(y_0)}\cap \overline{\Omega} = \{x_0\}$. Let $\sigma\in(0,1)$, $s\in(\sigma,1)$, $u\in L^\infty(\R^N)$ be a pointwise solution of $(-\Delta)^su=f$ in $\Omega$ with $f\in L^\infty(\Omega)$, and $u=0$ in $\R^N\backslash \Omega$. Let $M>0$ satisfy that $\|u\|_{L^\infty}+\|f\|_{L^\infty(\Omega)}<M$.  Then, there are $C=C(N,M,r_0,\sigma)>0$ and $\delta= \delta(N,M,r_0,\sigma)>0$ such that
\begin{align*}
    |u(x)|<C \operatorname{dist}(x,\partial \Omega)^s\qquad \text{for all $x\in \Omega$ with $\operatorname{dist}(x,\partial \Omega)<\delta$.}
\end{align*}
In particular, $u\in C^s(\overline{\Omega})$, $u=0$ on $\partial \Omega$, and there is $\delta_0=\delta_0(N,M,r_0,\sigma)>0$ such that $|u(x)|<\frac{1}{2}$ if $\operatorname{dist}(x,\partial \Omega)<\delta_0$.
\end{Lemma}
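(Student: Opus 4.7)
The strategy is to adapt the barrier argument of \cite[Lemma 2.6]{reguros}, with explicit bookkeeping so that all constants remain independent of $s$ on the range $[\sigma, 1)$. To set it up, fix $x \in \Omega$ with $d(x) := \operatorname{dist}(x, \partial \Omega)$ small, let $x_0 \in \partial \Omega$ realize $|x - x_0| = d(x)$, and let $B_{r_0}(y_0) \subset \R^N \setminus \Omega$ be the corresponding exterior tangent ball. The three points $y_0, x_0, x$ are collinear on the inward normal at $x_0$, so $|x - y_0| = d(x) + r_0$ exactly; this precise equality (rather than a mere inequality) is what will let the barrier bound be sharp. After a translation and rescaling (absorbing the dependence on $r_0$ into the final constants), we may assume $y_0 = 0$ and $r_0 = 1$, so that $B_1 \subset \R^N \setminus \Omega$ and $|x| - 1 = d(x)$.

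The construction of \cite[Lemma 2.6]{reguros}, specialized to this normalized setup, produces a barrier $\psi \in L^\infty(\R^N)$, supported in $\overline{B_2}$, such that $\psi \equiv 0$ on $B_1$, $\psi(z) \leq C_1 (|z|-1)^s$ on $B_{3/2} \setminus B_1$, and $(-\Delta)^s \psi(z) \geq c_1 > 0$ on $B_{3/2} \setminus \overline{B_1}$, with $C_1 = C_1(N)$ and $c_1 = c_1(N,\sigma)$. A convenient explicit choice is $\psi(z) := \eta(|z|)\,(|z|^2 - 1)_+^s$ with a fixed smooth cutoff $\eta$ equal to $1$ on $[0, 3/2]$ and vanishing outside $[0, 2]$; the lower bound on $(-\Delta)^s \psi$ follows from the classical Getoor formula $(-\Delta)^s(1-|y|^2)_+^s = \kappa_{N,s}$ on $B_1$ (by inversion, or by direct singular-integral calculation), combined with a uniformly controlled error generated by the cutoff.

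Once the barrier is in place, choose $C_0 = C_0(N, M, \sigma)$ so large that $C_0 \psi \geq M$ on $\Omega \setminus B_{3/2}$ and $(-\Delta)^s(C_0 \psi) \geq M \geq \|f\|_{L^\infty(\Omega)}$ on $V := \Omega \cap (B_{3/2} \setminus \overline{B_1})$. Since $|u| \leq M$ globally and $u \equiv 0$ on $B_1$, the function $C_0 \psi \pm u$ is $(-\Delta)^s$-superharmonic on $V$ and non-negative on $\R^N \setminus V$, so the maximum principle applied on the bounded set $V$ yields $C_0 \psi \geq |u|$ on $V$. Evaluating at the chosen $x$ gives
$$|u(x)| \leq C_0 \psi(x) \leq C_0 C_1 (|x| - 1)^s = C\, d(x)^s,$$
valid whenever $d(x) < \delta := 1/2$, with $C, \delta$ depending only on $N, M, r_0, \sigma$. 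The continuity of $u$ up to $\partial \Omega$ with value $0$, the regularity $u \in C^s(\overline{\Omega})$ (by interpolating this boundary decay with the interior Hölder estimate of Lemma~\ref{lem:int:reg}), and the existence of $\delta_0$ with $|u| < 1/2$ all follow.

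The main obstacle is the $s$-uniform positivity of $c_1$: the factor $s(1-s)$ in $c_{N,s}$ vanishes as $s \to 1^-$, so one must verify that this factor is compensated by the singularity of the integral kernel in the Getoor computation. The explicit form of $\kappa_{N,s}$ makes this transparent, as $\kappa_{N,s}$ extends continuously up to $s = 1$ (where it equals $2N$, the classical value of $-\Delta(1 - |y|^2)$), and a compactness argument on $[\sigma, 1]$ then secures the uniform lower bound; the cutoff correction and the choice of $C_0$ are controlled by analogous continuity considerations.
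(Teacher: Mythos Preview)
Your overall strategy---a barrier comparison built from the torsion function, with constants tracked uniformly in $s\in[\sigma,1)$---is the same as the paper's, and your direct argument (versus the paper's contradiction) is a reasonable variant. But the specific barrier you propose has a genuine gap in the far-field comparison. You take $\psi(z)=\eta(|z|)(|z|^2-1)_+^s$ supported in $\overline{B_2}$ and then assert that $C_0$ can be chosen so that $C_0\psi\geq M$ on $\Omega\setminus B_{3/2}$. This is impossible: $\psi\equiv 0$ outside $B_2$, and $\psi(z)\to 0$ as $|z|\to 2^-$ because $\eta$ vanishes there, so $\inf_{\Omega\setminus B_{3/2}}\psi=0$ whenever $\Omega$ reaches near or beyond $\partial B_2$. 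Since the lemma explicitly allows $\Omega$ to be unbounded (and in the applications the rescaled domains do become large), no finite multiple of $\psi$ dominates $|u|\leq M$ on $\Omega\setminus B_{3/2}$, and the maximum-principle comparison on $V$ collapses. The paper repairs exactly this defect by adding to the barrier a fixed smooth radial function $\zeta$ with $\zeta\equiv M$ on $\R^N\setminus B_3$ and $\zeta\equiv 0$ on $B_2$; the extra contribution $\sup_{s\in(\sigma,1)}\|(-\Delta)^s\zeta\|_{L^\infty(B_3)}$ is finite and is absorbed into the choice of the multiplicative constant.

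There is also a looser point in your computation of $(-\Delta)^s\psi$. You propose to obtain the lower bound from Getoor's identity on $B_1$ ``by inversion'' plus a ``uniformly controlled error generated by the cutoff''. But the Kelvin transform of the torsion function is $a_{N,s}|z|^{2s-N}(1-|z|^{-2})_+^s=a_{N,s}|z|^{-N}(|z|^2-1)^s$, not $(|z|^2-1)_+^s$; the latter grows like $|z|^{2s}$ and is not in $L^1_s(\R^N)$, so one cannot write $\psi$ as (Kelvin barrier) $+$ (cutoff correction) and apply $(-\Delta)^s$ termwise. The paper avoids this by working directly with the Kelvin-transformed torsion function (plus $\zeta$), for which $(-\Delta)^s\Psi(z)=M_0|z|^{-2s-N}+(-\Delta)^s\zeta(z)$ is explicit on the comparison annulus, and the uniformity in $s$ reduces to the continuity of $a_{N,s}$ and of $(-\Delta)^s\zeta$ on $[\sigma,1]$---precisely the mechanism you identify in your final paragraph.
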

\begin{proof}
By contradiction, assume that there is $(x_n)\subset \Omega$ and $(s_n)\subset (\sigma,1)$ such that
\begin{align}\label{con2}
 \text{$\operatorname{dist}(x_n,\partial \Omega)\to 0$ as $n\to\infty$ and $|u(x_n)|\geq n\operatorname{dist}(x_n,\partial \Omega)^{s_n}$ for all $n\in\N$.}
\end{align} Let $\xi_n\in \partial \Omega$ be such that $\operatorname{dist}(x_n,\partial \Omega)=|x_n-\xi_n|$.  By the uniform sphere condition, 
\begin{align*}
B_{r_0}(y_n)\subset \R^N\backslash \Omega,
\qquad \overline{B_{r_0}(y_n)}\cap \overline{\Omega}=\{\xi_n\},
\qquad y_n=-\frac{x_n-\xi_n}{|x_n-\xi_n|}r_0.
\end{align*}
By scaling the problem, we can assume that $r_0=1.$ Now we use the torsion function to build a barrier, as in \cite[Lemma 2.6]{reguros}. Let
\begin{align*}
    \psi_n(x):=a_{N,{s_n}}(1-|x-y_n|^2)^{s_n}_+,\qquad a_{N,{s_n}}:=\frac{2^{-2{s_n}}\Gamma(\tfrac{N}{2})}{\Gamma(\frac{N+2{s_n}}{2})\Gamma(1+{s_n})}.
\end{align*}
Then $(-\Delta)^{s_n} \psi_n=1$ in $B_1(y_n)$. Let $\zeta$ be a fixed radial smooth function such that $\zeta=M$ in $\R^N\backslash B_3$ and $\zeta=0$ in $B_2$. Let
\begin{align*}
    M_1:=\sup_{s\in(\sigma,1),x\in B_3}|(-\Delta)^s\zeta(x)|,
    \qquad M_0:=3^{2+N}(M+M_1),
\end{align*}
note that $M_1<\infty$, \emph{e.g.}, by \cite[Lemma B.5]{AJS18}.

 Let $D:=B_3(y_n)\cap \Omega$. Using the Kelvin transform \cite[Proposition A.1]{reguros}, we have that
\begin{align*}
\Psi_n(x):=M_0 a_{N,{s_n}}|x-y_n|^{2{s_n}-N}(1-|x-y_n|^{-2})^{s_n}_+ + \zeta(x-y_n),\qquad x\in\R^N,
\end{align*}
satisfies that
\begin{align*}
(-\Delta)^{s_n} \Psi_n(x)
=M_0|x-y_n|^{-2{s_n}-N}+(-\Delta)^{s_n}\zeta(x-y_n)
\geq \frac{M_0}{3^{2+N}}-M_1
= M\geq (-\Delta)^{s_n} u(x)
\end{align*}
in $D$, $\Psi_n=0$ in $B_1(y_n)$, $\Psi_n\geq M\geq u$ in $\R^N\backslash B_3(y_n)$, and $\Psi_n\in H^{s_n}_{loc}(\R^N)$ (see \cite[Lemma 2.6]{reguros}). Then, since $u=0$ in $\R^N\backslash \Omega$, we have that
$\Psi_n-u\geq 0$ in $\R^N\backslash D.$  Therefore, by the weak maximum principle (see for example \cite[Proposition 3.1 \& Remark 3.2]{FJ15}), $u(x)\leq \Psi_n(x)$ in $D$. Arguing similarly with $-u$, we obtain also that $|u(x)|\leq \Psi_n(x)$ in $D$. Since $|x_n-\xi_n|=\dist(x_n,\partial \Omega)\to 0$, we may assume that $x_n\in \Omega \cap B_{2}(y_n)$.  Then, using that $\zeta=0$ in $B_2(y_n)$ and that $|x_n-y_n|=1+|x_n-\xi_n|=1+\operatorname{dist}(x_n,\partial \Omega)>1$, we obtain that
 \begin{align}
     u(x_n)&\leq M_0 a_{N,{s_n}}|x_n-y_n|^{2{s_n}-N}(1-|x_n-y_n|^{-2})^{s_n}
     \leq M_0 a_{N,{s_n}}\left(|x_n-y_n|^2-1\right)^{s_n}\notag\\
     \label{eqGamm}
     &= M_0 a_{N,{s_n}}\left(|x_n-y_n|+1\right)^{s_n}\left(|x_n-y_n|-1\right)^{s_n}
     \leq 3 M_0C_1\dist(x_n,\partial \Omega)^{s_n}
     \end{align}
 for $x\in \Omega \cap B_{2}(y_n)$, where $C_1:=\sup_{t\in(\sigma,1)} a_{N,t}<\infty$, by the properties of the $\Gamma$ function.
Since \eqref{eqGamm} contradicts \eqref{con2}, the claim follows. 
\end{proof}

\begin{remark}\label{rmk:bdry}
 Let $\Omega$ be a domain with $C^2$ boundary and $\sigma\in(0,1)$. Then there is $r_0>0$ such that $\Omega$ satisfies the uniform exterior sphere condition. Let $\delta_0=\delta_0(N,M,r_0,\sigma)>0$ by given by Lemma \ref{lem:unif}. Note that, if  $\mu>1$, then the domain $\mu\Omega$ satisfies in particular the uniform exterior sphere condition \emph{with the same $r_0$}. Therefore, for any $\eta>1$ and $s\in(\sigma,1)$, if $u_{s,\eta}\in L^\infty(\R^N)$ satisfies that $u_{s,\eta}=0$ in $\R^N\backslash (\eta\Omega)$ and 
 $\|u_{s,\eta}\|_{L^\infty}+\|(-\Delta)^su_{s,\eta}\|_{L^\infty(\eta\Omega)}<M$ for some $M$ independent of $\eta$ and $s$, then $|u_{s,\eta}(x)|<\frac{1}{2}$ for all $x\in \eta\Omega$ such that $\operatorname{dist}(x,\partial (\eta\Omega))<\delta_0$.
\end{remark}

\subsection{Liouville theorems}
We show the nonexistence of positive bounded solutions for a linear fractional problem both in the whole space and in the half space. We do not require integrability assumptions on the solutions.

We denote by $\lambda_1^s(B_R)$ and $\varphi_{1,R}^s$ the first eigenvalue and the corresponding positive eigenfunction of $(-\Delta)^s$ in the ball $B_R$ of radius $R>0$ centred at the origin, with zero Dirichlet exterior condition.  Let us recall that  $\lambda_1^s(B_R)\to 0$ as $R\to \infty$ (for instance, by the fractional Faber-Krahn inequality \cite{BrascoLindgrenParini}).

\begin{theorem}\label{Theorem:BigLiouville}
Let $R>0$ be such that $\lambda_1^s(B_R)<1$. If 
 $w\in L^{\infty}(\mathbb R^N)$ satisfies pointwisely that
\begin{align}\label{eq:bigLiuov}
\left\lbrace 
\begin{array}{rcll}
(-\Delta)^s w & = & w & {\text{ in }}B_{R}(x_0),\\
w & \geq & 0 &{\text{ in }} \ren.
\end{array}\right.
\end{align}
for some $x_0$ in $\mathbb R^N$, then $w\equiv 0$ in $\mathbb R^N$.
\end{theorem}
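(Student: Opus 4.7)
\medskip
\noindent\textbf{Proof plan.} The idea is to test the equation against the first Dirichlet eigenfunction of $(-\Delta)^s$ on the ball $B_R(x_0)$. Let $\varphi(x):=\varphi_{1,R}^s(x-x_0)$, so that $(-\Delta)^s\varphi=\lambda_1^s(B_R)\varphi$ pointwise in $B_R(x_0)$, $\varphi>0$ in $B_R(x_0)$, and $\varphi\equiv 0$ in $\ren\setminus B_R(x_0)$. The first observation is that $(-\Delta)^s\varphi$ is \emph{strictly negative} outside $B_R(x_0)$: for $x\notin\overline{B_R(x_0)}$ the singular integral collapses to
\[
(-\Delta)^s\varphi(x)=-c_{N,s}\int_{B_R(x_0)}\frac{\varphi(y)}{|x-y|^{N+2s}}\,dy<0.
\]

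Next I would multiply the pointwise equation $(-\Delta)^s w=w$ by $\varphi$ and integrate, using the self-adjointness of $(-\Delta)^s$ in the form $\int \varphi\,(-\Delta)^s w\,dx=\int w\,(-\Delta)^s\varphi\,dx$. Splitting the right-hand side according to whether $x\in B_R(x_0)$ or not, and using the eigenvalue equation inside, I obtain
\[
\int_{B_R(x_0)}\varphi w\,dx=\lambda_1^s(B_R)\int_{B_R(x_0)}\varphi w\,dx+\int_{\ren\setminus B_R(x_0)}w\,(-\Delta)^s\varphi\,dx,
\]
which rearranges to
\[
\bigl(1-\lambda_1^s(B_R)\bigr)\int_{B_R(x_0)}\varphi w\,dx=\int_{\ren\setminus B_R(x_0)}w\,(-\Delta)^s\varphi\,dx.
\]
Under the hypothesis $\lambda_1^s(B_R)<1$, together with $\varphi>0$ in $B_R(x_0)$ and $w\ge 0$, the left-hand side is nonnegative, whereas the strict negativity of $(-\Delta)^s\varphi$ outside makes the right-hand side nonpositive. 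Hence both sides vanish. The vanishing of the left, combined with $\varphi>0$ in $B_R(x_0)$ and the interior smoothness of $w$ there inherited from $(-\Delta)^s w=w\in L^\infty$ via a standard bootstrap, gives $w\equiv 0$ on $B_R(x_0)$; the vanishing of the right together with $(-\Delta)^s\varphi<0$ outside gives $w=0$ a.e.\ on $\ren\setminus B_R(x_0)$. These together produce $w\equiv 0$ in $\ren$.

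\medskip
\noindent\textbf{Main obstacle.} The delicate point is the rigorous justification of the duality $\int \varphi\,(-\Delta)^s w\,dx=\int w\,(-\Delta)^s\varphi\,dx$: the bilinear form identity \eqref{bypart} is stated for $\cH^s_0(\Omega)$-functions, whereas here $w\in L^\infty(\ren)$ does not vanish outside $B_R(x_0)$ and $\varphi$ is only $C^s(\overline{B_R(x_0)})$ rather than $C_c^\infty$. I would resolve this by approximating $\varphi$ with a sequence $\varphi_n\in C_c^\infty(B_R(x_0))$ (obtained by mollifying a suitable truncation of $\varphi$ away from $\partial B_R(x_0)$) and passing to the limit, exploiting: (i) the interior smoothness of $w$ in $B_R(x_0)$, which handles $\int\varphi_n(-\Delta)^s w\to \int\varphi(-\Delta)^s w$; and (ii) the $|x-y|^{-N-2s}$ decay of the kernel together with the $C^s$ boundary control of $\varphi$ from Lemma~\ref{threg}, which lets one pass $(-\Delta)^s\varphi_n\to (-\Delta)^s\varphi$ to the limit against the bounded $w$.
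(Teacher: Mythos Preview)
Your proof is correct and follows essentially the same route as the paper: both test the equation against the translated first Dirichlet eigenfunction $\varphi$ and combine $\lambda_1^s(B_R)<1$ with the strict negativity of $(-\Delta)^s\varphi$ outside the ball to force $w\equiv 0$. The only difference is technical: the paper justifies the duality $\int\varphi\,(-\Delta)^s w=\int w\,(-\Delta)^s\varphi$ by directly checking that the symmetric bilinear form and the exterior kernel integral are finite (invoking \cite[Lemma~2.2]{fallmorse} and \cite[Lemma~2.3]{JSW20}, together with the barrier bound $\varphi\le C(1-|x-x_0|^2)^s$), rather than via your $C_c^\infty$-approximation scheme.
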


\begin{Corollary}\label{liouv1}
Let $w$ be a bounded classical solution to
\begin{align*}
\left\lbrace 
\begin{array}{rcll}
(-\Delta)^s w & = & w & {\text{ in }} \ren,\\
w & \geq & 0 &{\text{ in }} \ren.
\end{array}\right.
\end{align*}
Then $w=0$ in $\ren$.
\end{Corollary}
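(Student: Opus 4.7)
The plan is to deduce this corollary as an immediate consequence of Theorem~\ref{Theorem:BigLiouville}. The only real work is choosing the radius $R$ appropriately so that the hypothesis $\lambda_1^s(B_R)<1$ of Theorem~\ref{Theorem:BigLiouville} is satisfied, and then observing that our global equation in particular restricts to an equation on that ball.

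More concretely, I would proceed as follows. First, recall the fact already noted in the excerpt (coming from the fractional Faber--Krahn inequality \cite{BrascoLindgrenParini}) that $\lambda_1^s(B_R)\to 0$ as $R\to \infty$. Therefore there exists $R>0$ large enough such that $\lambda_1^s(B_R)<1$. Fix such an $R$ and take $x_0=0$ (any point works). Since $w\in L^\infty(\R^N)$, is nonnegative in $\R^N$, and satisfies pointwisely $(-\Delta)^s w=w$ on all of $\R^N$, it satisfies in particular $(-\Delta)^s w=w$ in the ball $B_R(x_0)$, and is nonnegative everywhere in $\R^N$. Hence $w$ meets all the hypotheses of Theorem~\ref{Theorem:BigLiouville}, which yields $w\equiv 0$ in $\R^N$.

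There is essentially no obstacle here: the content is entirely in Theorem~\ref{Theorem:BigLiouville}, and the corollary is merely a specialization to the case when the equation holds on the whole space. The only point worth double-checking is that the Faber--Krahn-type decay $\lambda_1^s(B_R)\to 0$ indeed guarantees the existence of some admissible $R$, but this is standard for the Dirichlet fractional Laplacian and is explicitly invoked just before the statement of Theorem~\ref{Theorem:BigLiouville}.
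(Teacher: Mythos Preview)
Your proposal is correct and is exactly the paper's intended approach: the corollary is stated immediately after Theorem~\ref{Theorem:BigLiouville} with no separate proof, and the paper explicitly notes that the decay $\lambda_1^s(B_R)\to 0$ as $R\to\infty$ allows the choice of an admissible radius. There is nothing to add.
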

\begin{Corollary}\label{liouv2}
Let $w$ be a bounded classical solution to
\begin{align}\label{liouv}
\left\lbrace 
\begin{array}{rcll}
(-\Delta)^s w & = & w & {\text{ in }} \ren_+,\\
w & \geq & 0 &{\text{ in }} \ren.
\end{array}\right.
\end{align}
Then $w=0$ in $\ren_+$.
\end{Corollary}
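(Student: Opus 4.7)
The plan is to reduce Corollary \ref{liouv2} directly to Theorem \ref{Theorem:BigLiouville}. The crucial point is that Theorem \ref{Theorem:BigLiouville} is already local in nature on the equation side: it only requires the identity $(-\Delta)^s w = w$ to hold on a \emph{single} ball $B_R(x_0)$ of sufficiently large radius, while the non-negativity is required on all of $\R^N$. In the half-space setting both hypotheses are within reach, because the assumption $w\geq 0$ is imposed on the whole $\R^N$, and the equation is satisfied pointwise on any ball strictly contained in $\R^N_+$.

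First I would use the fact, recalled just before Theorem \ref{Theorem:BigLiouville} (via the fractional Faber--Krahn inequality, $\lambda_1^s(B_R)\to 0$ as $R\to\infty$), to fix a radius $R>0$ with $\lambda_1^s(B_R)<1$. Next I would choose a center $x_0=(x_0',t_0)\in\R^N_+$ with $t_0>R$, so that $B_R(x_0)\subset \R^N_+$. Since $w$ is a bounded classical solution of \eqref{liouv}, the pointwise identity $(-\Delta)^s w=w$ holds throughout $B_R(x_0)$, and $w\in L^\infty(\R^N)$ with $w\geq 0$ on all of $\R^N$. The hypotheses of Theorem \ref{Theorem:BigLiouville} are thus satisfied, and it yields $w\equiv 0$ in $\R^N$, which in particular gives the claim $w\equiv 0$ in $\R^N_+$.

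Since the argument is a direct consequence of the stronger Liouville-type result on balls, there is essentially no obstacle beyond verifying the hypotheses. The only ingredients that need attention are (i) the choice of $R$, for which it is essential that $\lambda_1^s(B_R)$ can be made strictly less than $1$ by enlarging $R$, and (ii) the simple but necessary observation that any point $x_0\in\R^N_+$ sufficiently far from $\partial\R^N_+$ admits a ball of radius $R$ entirely contained in the half-space, so that the pointwise equation can be invoked on such a ball. Note that the same scheme, applied with any $x_0\in\R^N$, recovers Corollary \ref{liouv1} as well; the half-space version is slightly more delicate only in that $x_0$ must be chosen far enough from the boundary hyperplane.
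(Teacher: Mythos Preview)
Your proposal is correct and follows exactly the paper's intended approach: both corollaries are immediate consequences of Theorem~\ref{Theorem:BigLiouville}, obtained by placing a ball $B_R(x_0)$ of radius $R$ with $\lambda_1^s(B_R)<1$ inside the region where the equation holds (here $\R^N_+$) and invoking the global nonnegativity assumption. The paper does not spell out the proof of the corollary precisely because this reduction is direct.
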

Corollary \ref{liouv1} is a special case of a Liouville type result proved by Jin, Li and Xiong \cite{JinLiXiong} for  more general, also nonlinear, problems (see also \cite{BrandleColoradoPabloSanchez, Ou, ChenLiOu}). Our proof is very simple but applies only in the linear case.
For the half space, non-existence results for \eqref{liouv} may be found in  \cite{FallWethNonexistence} but under integrability assumptions, see also  \cite{FallWethMonotonicity} for superlinear problems.

\begin{proof}[Proof of Theorem~\ref{Theorem:BigLiouville}]
By contradiction, assume there is a bounded nontrivial (pointwise) solution $w$ of \eqref{eq:bigLiuov}. Let $\phi(x):=\varphi_{1,R}^s(x-x_0)$.  Note that
\begin{align*}
    \frac{c_{N,s}}{2}\int_{\R^N}
\int_{\mathbb{R}^{N}}\frac{(\phi(x)-\phi(y))(w(x)-w(y))}{|x-y|^{N+2s}}\, dy
\,dx<\infty,
\end{align*}
by \cite[Lemma 2.2]{fallmorse}. Moreover, using that $\phi\leq C(1-|x-x_0|^2)^s$ and \cite[Lemma 2.3]{JSW20} (with $N\geq 2$, $a=-s$, $\lambda=-2s$), there is a constant $C_1=C_1(N,R,s)>0$ such that
\begin{align*}
0&\leq \int_{\R^N\backslash B_R(x_0)}w(x) \int_{B_R(x_0)}\frac{\phi(y)}{|x-y|^{N+2s}}\, dy\,dx
\leq C\|w\|_{L^\infty}\int_{\R^N\backslash B_R(x_0)} \int_{B_R(0)}\frac{(1-|y|^2)^s}{|x-y|^{N+2s}}\, dy\,dx\\
&\leq CC_1|w|_{L^\infty}\int_{\R^N\backslash B_R(x_0)}\frac{1+(1-|x|^2)^{-s}}{1+(1-|x|^2)^{N+2s}} \,dx<\infty.
\end{align*}
Therefore, using that $(-\Delta)^s\phi = \lambda_1^s(B_R)\phi$ in $B_R(x_0)$, $w\geq 0$, and $\phi\geq 0$ in $\R^N$,
\begin{align*}
\int_{B_R(x_0)}\phi\, w\,dx
&=\int_{\R^N}\phi(-\Delta)^s w\,dx 
=c_{N,s}\int_{\R^N}
p.v.\int_{\mathbb{R}^{N}}\phi(x)\frac{w(x)-w(y)}{|x-y|^{N+2s}}\, dy
\,dx\\
&=\frac{c_{N,s}}{2}\int_{\R^N}
\int_{\mathbb{R}^{N}}\frac{(\phi(x)-\phi(y))(w(x)-w(y))}{|x-y|^{N+2s}}\, dy
\,dx\\
&=c_{N,s}\int_{\R^N}
p.v.\int_{\mathbb{R}^{N}}w(x)\frac{\phi(x)-\phi(y)}{|x-y|^{N+2s}}\, dy
\,dx \\
&=\lambda_1^s(B_R)\int_{B_R(x_0)}w \phi\,dx 
-\int_{\R^N\backslash B_R(x_0)}w(x) \int_{B_R(x_0)}\frac{\phi(y)}{|x-y|^{N+2s}}\, dy\,dx\\
&<\int_{B_R(x_0)}\phi\, w\,dx,
\end{align*}
which yields a contradiction.  
\end{proof}

\begin{remark}[Multiplicity of solutions]\label{rmk:mult}
A concrete multiplicity result for \eqref{main} can be deduced from \cite[Theorem 1.2]{DLS17} in the following way.  Let $s\in(0,1)$, $\Omega:=U\cup V\subset \R^N$, $N\geq 2$, where $U$ and $V$ are two disjoint balls of different sizes. Let $G$ denote the Green's function for $(-\Delta)^s$ in $\Omega$ and, for $\xi\in \Omega$, let $H$ be the solution of 
\begin{align*}
(-\Delta)^s H(x,\xi)=0\quad \text{ for }x\in\Omega,\qquad H(x,\xi)=\Gamma(x-\xi)\quad \text{ for }x\in \R^N\backslash \Omega,
\end{align*}
 where $\Gamma$ is the fundamental solution of $(-\Delta)^s$ in $\R^N$. The function $H$ is the regular part of the Green's function. Let
\begin{align*}
\Psi(\xi,\Lambda):=\frac{H(\xi,\xi)}{2}\Lambda^2-\log(\Lambda),\qquad  \xi\in \Omega,\ \Lambda\in (0,\infty).
\end{align*}
In particular, since $H(\xi,\xi)$ is positive for $\xi\in\Omega$ and $H(\xi,\xi)\to \infty$ as $\xi\to \partial\Omega$ (because $\Gamma(x)\to \infty$ as $|x|\to 0$), it is easy to see that $\Psi$ has two local minima at $(\xi_1,\Lambda_1)$ and $(\xi_1,\Lambda_2)$ for some $\xi_1\in U$ and $\xi_2\in V$.  These points are stable critical points of $\Psi$.  Therefore, by \cite[Theorem 1.2]{DLS17}, there are two different positive solutions of \eqref{main} with $p=\frac{N+2s}{N-2s}-\eps$ with $\eps>0$ small enough. 
\end{remark}

\section{The asymptotically linear case}\label{sec:pcto1}
In this section we prove Theorem~\ref{exit p small}. Consider the linearized problem at a solution $u$ of \eqref{main} given by
\begin{equation}
\label{linearized}
\left\lbrace 
\begin{array}{rcll}
(-\Delta)^{s} h+\l h &= & pu^{p-1}h & {\text{ in }} \O,\\
h &= & 0 & {\text{ in }}\RR^N\backslash \O, \\
\end{array}\right.
\end{equation}
and recall that a solution $u$ of \eqref{main} is said to be \emph{nondegenerate} if \eqref{linearized} has only the trivial solution $h=0$, i.e. if $\mu=0$ is not an eigenvalue of the linearized operator $L_u:=(-\Delta)^{s} +\l - pu^{p-1} $.

We prove first the following key lemma.
\begin{Lemma}\label{pclose1}
Let $s\in(0,1)$, $(p_n)_n\subset(1,\frac{N+2s}{N-2s})$ be a sequence such that $p_n\to 1$, let $u_n$ be a solution to \eqref{main} with $p=p_n$, and let $M_n:=\|u_n\|_{L^\infty}$. Then,
\begin{align*}
M_n^{p_n-1}\to\l_1^s(\O)+\l \ \text{  and  }\ \frac{u_n}{M_n}\to \varphi_1^s \text{ uniformly in $\overline\O $  and in $C^{2s+\alpha}_{loc}(\O)$ as $n\to\infty$}  
\end{align*}
for some $\alpha\in(0,1)$, where $\lambda_1^s(\O)$ and  $\varphi_1^s$ denote respectively the first eigenvalue and eigenfunction for the fractional Laplacian in $\Omega$ with exterior Dirichlet condition.
\end{Lemma}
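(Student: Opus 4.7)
The plan is to analyse the normalised sequence $v_n:=u_n/M_n$, which has $\|v_n\|_{L^\infty}=1$, $v_n>0$ in $\Omega$, and satisfies
\begin{align*}
(-\Delta)^s v_n+\lambda v_n = L_n v_n^{p_n}\ \text{in }\Omega,\qquad v_n=0\ \text{in }\R^N\setminus\Omega,
\end{align*}
where $L_n:=M_n^{p_n-1}$. It suffices to establish (i) $L_n$ is bounded away from $0$ and $\infty$; (ii) $v_n$ converges (along a subsequence) in $C(\overline\Omega)\cap C^{2s+\alpha}_{loc}(\Omega)$ to some $v$; (iii) $v$ is a positive eigenfunction of $(-\Delta)^s+\lambda$ in $\Omega$, which by simplicity and positivity of $\varphi_1^s$ forces $L_n\to\lambda_1^s(\Omega)+\lambda$ and $v=\varphi_1^s$ (normalising $\|\varphi_1^s\|_{L^\infty}=1$). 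Uniqueness of the limit then lifts the subsequential convergence to the full sequence.

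The lower bound $L_n\geq \lambda_1^s(\Omega)+\lambda$ is immediate from testing \eqref{main} against $\varphi_1^s$ (via \eqref{bypart}) and the pointwise inequality $u_n^{p_n-1}\leq M_n^{p_n-1}$. The upper bound is the technical core, and I would argue by rescaling. Assuming for contradiction $L_n\to\infty$ and letting $x_n\in\Omega$ be a maximum point of $v_n$, set $r_n:=L_n^{-1/(2s)}\to 0$ and $w_n(y):=v_n(x_n+r_ny)$ on $\Omega_n:=r_n^{-1}(\Omega-x_n)$, so that
\begin{align*}
(-\Delta)^s w_n+\lambda r_n^{2s}w_n=w_n^{p_n}\ \text{in }\Omega_n,\qquad w_n(0)=1=\|w_n\|_{L^\infty}.
\end{align*}
Remark \ref{rmk:bdry} (which applies because the uniform exterior sphere radius of $r_n^{-1}\Omega$ is bounded below uniformly in $n$) forces $\operatorname{dist}(0,\partial\Omega_n)\geq\delta_0>0$ uniformly. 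Combining the interior estimates of Lemmas \ref{lemma:interiorRegularity1}--\ref{lemma:interiorRegularity2} with the nonlocal tail bound of Lemma \ref{lem:int:reg}, a subsequence of $w_n$ converges in $C^{2s+\alpha}_{loc}$ to some $w$ with $w(0)=1$, $0\leq w\leq 1$ on $\R^N$, satisfying $(-\Delta)^sw=w$ either on $\R^N$ (if $\operatorname{dist}(x_n,\partial\Omega)/r_n\to\infty$) or on a half-space (if that ratio stays bounded). Corollary \ref{liouv1} or \ref{liouv2} then forces $w\equiv 0$, a contradiction.

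With $L_n$ bounded, the right-hand side $L_nv_n^{p_n}-\lambda v_n$ is uniformly bounded in $L^\infty(\Omega)$, so Lemma \ref{threg} yields a uniform $C^s(\overline\Omega)$-bound for $v_n$; extracting, $v_n\to v$ in $C(\overline\Omega)$ and $L_n\to L\geq\lambda_1^s(\Omega)+\lambda$, and Lemma \ref{lemma:interiorRegularity2} upgrades the convergence to $C^{2s+\alpha}_{loc}(\Omega)$. Passing to the limit, $v\geq 0$ solves $(-\Delta)^sv+\lambda v=Lv$ in $\Omega$ with $v=0$ in $\R^N\setminus\Omega$. Lemma \ref{lem:unif} keeps $x_n$ uniformly away from $\partial\Omega$, so its limit $x^*$ lies in $\Omega$ and $v(x^*)=1$; in particular $v\not\equiv 0$. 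Since $v\geq 0$ is an eigenfunction, simplicity of the first Dirichlet eigenfunction identifies $L=\lambda_1^s(\Omega)+\lambda$ and $v=\varphi_1^s$.

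The principal obstacle is the upper bound on $L_n$: the rescaling requires uniform boundary barrier estimates in order to prevent the centres $x_n$ from drifting to $\partial\Omega$, together with the correct identification of the limiting domain (interior versus boundary blow-up) so that the fractional Liouville theorems apply. A secondary subtlety is keeping every regularity constant independent of $n$, especially when controlling the nonlocal tail in the rescaled equation; this is precisely where the uniform statements of Lemma \ref{lem:int:reg} and Remark \ref{rmk:bdry} come in.
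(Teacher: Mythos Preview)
Your proposal is correct and follows essentially the same route as the paper: a blow-up argument to bound $L_n=M_n^{p_n-1}$ from above (with the interior/half-space dichotomy and the barrier of Lemma~\ref{lem:unif}/Remark~\ref{rmk:bdry} to rule out boundary concentration, followed by the Liouville results of Corollaries~\ref{liouv1}--\ref{liouv2}), then boundary and interior regularity to pass to the limit and identify $v$ as the first eigenfunction. The only cosmetic differences are that you add a direct lower bound $L_n\geq\lambda_1^s(\Omega)+\lambda$ by testing against $\varphi_1^s$ (the paper gets this implicitly from the limit), you invoke Lemma~\ref{lem:int:reg} where Lemmas~\ref{lemma:interiorRegularity1}--\ref{lemma:interiorRegularity2} already suffice since $s$ is fixed, and you use Lemma~\ref{lem:unif} in the final step to keep the maximum points interior whereas the paper uses uniform convergence together with $z|_{\partial\Omega}=0$.
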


\begin{proof}
{\sl Step 1. We show that $M_n^{p_n-1}$ is bounded.}\\
By contradiction, assume that $M_n^{p_n-1}\to \infty$ and let us 
perform a blow-up argument.
Let $(x_n)_{n\geq 1}$ be a sequence in $\O$ such that $M_n=u_n(x_n)$. Define
\begin{align*}
v_n(y):=\frac{1}{M_n}u_n\left( \mu_ny+x_n\right) 
\end{align*}
where $\mu_n:=M_n^{\frac{1-p_n}{2s}}\to 0$, then $v_n$ is a function satisfying $0 \leq v_n\leq 1$, $v_n(0) = 1$ and
\begin{align}\label{v_n eq}
\left\{\begin{array}{lr}
(-\Delta)^s v_n=v_n^{p_n}-\frac{\l}{M_n^{p_n-1}}v_n=:f_n\quad &\text{ in } \O_n,\\
 v_n=0 & \text{ in }\ren\backslash \O_n,
 \end{array}\right.
\end{align}
where $\O_n=\left\{y\in\ren:\, \mu_ny+x_n\in \O \right\}$. Up to subsequences, two situations may occur: 
\begin{align}\label{2cases}
\text{  either }\quad  \operatorname{dist}\left(x_{n},\,\partial\O\right) \mu_{n}^{-1} \rightarrow+\infty \quad \text{ or } \quad \operatorname{dist}\left(x_{n},\,\partial\O\right) \mu_{n}^{-1} \rightarrow \rho \geq 0.
\end{align}
Assume the first case holds, so that $\Omega_n \rightarrow \ren$ as $n \rightarrow+\infty$. 

We claim that, for any $R>0$ and $\alpha\in(0,\min\{2s,1\})$, there exists $n_R\in\mathbb N$ and $C=C(s,N,\alpha,R)>0$
such that
\begin{align}
\label{vnlocReg}
v_n\in C^{2s+\alpha}(\overline B_R) \ \mbox{ and }\ \|v_n\|_{C^{2s+\alpha}(\overline B_R)}\leq C, \quad \forall n\geq n_R.
\end{align}
In order to prove \eqref{vnlocReg} let us fix $R>0$. Then, since $\Omega_n \rightarrow \ren$ as $n \rightarrow+\infty$, there exists $n_R\in\mathbb N$ such that $\overline B_{4R}\subset\O_n$ for any $n\geq n_R$.
Since $v_n\in C^s(\ren)$ satisfies the uniform bound $0\leq v_{n} \leq 1$, then $f_n\in L^\infty(\overline B_{4R})$
with uniform bounds so, as a consequence of Lemma~\ref{lemma:interiorRegularity1}, $v_n\in C^{\alpha}(\overline B_{2R})$, for any $\alpha \in (0,2s)$, and \\
\[\|v_n\|_{C^\alpha(B_{2R})}\leq C,\]
where the constant $C=C(s,N,\alpha,R)$.
Then $f_n\in C^{\alpha}(B_{2R})$ with uniform $C^{\alpha}$-bound and so \eqref{vnlocReg} follows by Lemma~\ref{lemma:interiorRegularity2}.

By \eqref{vnlocReg}, using Arzel\`{a}-Ascoli's theorem and a diagonal argument we obtain that there exists a function $v\in C^{2s+\frac{\alpha}{2}}(\ren)$ such that,  passing to a subsequence,  $v_{n} \rightarrow v$ in $C^{2s+\frac{\alpha}{2}}_{loc}(\ren)$. Passing to the limit in 
\eqref{v_n eq} we see that $v$ solves $(-\Delta)^{s} v=v$ in $\ren$ pointwisely.
Furthermore, $0\leq v\leq 1$, so $v$ is bounded in $\ren$ and it follows that $v>0$ in $\R^N$; indeed, if $v(x_0)=0$ for some $x_0\in\R^N$, then
\begin{align*}
0=v(x_0)=c_{N,s}p.v.\int_{\R^N}\frac{-u(y)}{|x_0-y|^{N+2s}}\, dy<0.
\end{align*}
However, by Corollary \ref{liouv1}, no such $w$ can exist and we have reached a contradiction. 

If the second case in \eqref{2cases} holds then we may assume $x_{n} \rightarrow x_{0} \in \partial \Omega .$ With no loss of generality assume also $\nu(x_{0})=-e_{N}$. In this case it will be more convenient to work with
$$
w_{n}(y):=\frac{u_{n}\left(\mu_{n} y+\xi_{n}\right)}{M_n},
$$
where $\xi_{n} \in \partial \Omega$ is the projection of $x_{n}$ on $\partial \Omega$ and let $D_{n}:=\left\{y \in \mathbb{R}^{N}: \mu_{n} y+\xi_{n} \in \Omega\right\}.$  Observe that
\begin{align}
\label{0inD}
0 \in \partial D_{n}
\end{align}
and $D_{n} \rightarrow \R^N_+:=\left\{y \in \mathbb{R}^{N}: y_{N}>0\right\}$ as $n \rightarrow+\infty$.  It also follows that $w_{n}$ satisfies that
\begin{align}\label{v_n eq2}
(-\Delta)^s w_n=w_n^{p_n}-\frac{\l}{M_n^{p_n-1}}w_n \quad \text{ in } D_n,\qquad w_n=0\quad  \text{ in }\ren\backslash D_n.
\end{align}
Moreover, setting
$$
y_{n}:=\frac{x_{n}-\xi_{n}}{\mu_{n}}\ (\in D_n)
$$
so that $\left|y_{n}\right|=\operatorname{dist}\left(x_{n},\,\partial\O\right) \mu_{n}^{-1},$ and $w_{n}(y_{n})=1$. We claim  that
\begin{align}\label{claim:rhopositive}
\rho=\lim _{n \rightarrow+\infty} \operatorname{dist}\left(x_{n},\,\partial\O\right) \mu_{n}^{-1}>0.
\end{align}
 Observe that this will allow us to conclude that, up to passing to a further subsequence, $y_{n} \rightarrow y_{0}$ where $\left|y_{0}\right|=\rho>0,$ thus $y_{0}$ is an interior point of the half-space $\R^N_+$. Let us now show the claim. Observe that
  $\|w_n\|_{\infty}+\|(-\Delta)^sw_n\|_{L^{\infty}(D_n)}\leq 3=:M$, for  $n$ sufficiently large, hence by Lemma \ref{lem:unif} and Remark \ref{rmk:bdry}, there exists $\delta_0>0$ such that
 \begin{align}
\label{ah}w_n(y)<\frac{1}{2},\quad\mbox{ for all }y\in D_n \mbox{ such that }\operatorname{dist}\left(y,\partial D_n\right)<\delta_0,\end{align}
 for $n$ sufficiently large.  Now, by contradiction, assume that 
 \begin{align}
 \label{B_up_cont}
 \lim _{n \rightarrow+\infty} \operatorname{dist}\left(x_{n},\,\partial\O\right) \mu_{n}^{-1}=0.
 \end{align}
Then, since  by \eqref{0inD} we have $\operatorname{dist}\left(y_{n},\partial D_n\right)\leq\left|y_{n}\right|$, it follows that $\operatorname{dist}\left(y_{n},\partial D_n\right)<\delta_0$ for $n$ large. As a consequence, \eqref{ah} implies that $1 =w_n(y_n)<\frac{1}{2}$, which gives a contradiction  and  proves the claim in \eqref{claim:rhopositive}. \\
Now, arguing similarly as in the first case, we obtain that $w_{n} \rightarrow w$ in $C^{2s+\frac{\alpha}{2}}_{loc}(\ren_{+})$, where $w$ satisfies that $0 \leq w \leq 1$ in $\mathbb{R}_{+}^{N}$ and $w(y_{0})=1$.
Then $w$ 
is a bounded solution of
 \begin{align*}
 \left\{\begin{array}{llll}
 (-\Delta)^{s} w&=&w & \text { in } \R^N_+,\\
\qquad w&\geq&0& \text { in } \R^N_+,\\
  \end{array}\right.
 \end{align*}
moreover, $w>0$ in $\R^N_+$ and this yields a contradiction, by Corollary \ref{liouv2}. This shows that $M_n^{p_n-1}$ is bounded and concludes the proof of {\sl Step 1}.

\medskip

{\sl Step 2. Conclusion.}\\
$M_n^{p_n-1}$ is bounded by {\sl Step 1.} Thus, up to a subsequence, $M_n^{p_n-1}\to \mu$. Let $z_n:=\frac{u_n}{M_n}$, then it satisfies that
\begin{align*}
\left\lbrace 
\begin{array}{ll}
(-\Delta)^{s}z_n= -\l z_n+ M_n^{p_n-1}\,z_n^{p_n}=:g_n &\text{   in } \O,\\
 0<z_n\leq 1 &\text{   in } \O, \\
z_n=0 &{\text{ in }}\ren\backslash \O.\\
\end{array}\right.
\end{align*}
By Lemma~\ref{threg}, 
\[\|z_n\|_{C^s(\overline\O)}\leq C  \|g_n\|_{L^{\infty}(\O)}\leq C. \]
Hence, $z_n$ converges to $z$ in $C(\overline\O)$ by Arzelà-Ascoli's Theorem. 
Moreover, similarly to the proof of \eqref{vnlocReg}, using Lemmas~\ref{threg}, \ref{lemma:interiorRegularity1}, and  \ref{lemma:interiorRegularity2}, we have that, for all $\beta \in (0,\min\{2s,1\})$ and for any compact set $K\subset\subset \O$, there exist $n_K\in\mathbb N$ and $C=C(s,N,\beta,K)>0$
such that
\begin{align}
\label{znlocReg}
z_n\in C^{2s+\beta}(\overline K) \ \mbox{ and }\ \|z_n\|_{C^{2s+\beta}(\overline K)}\leq C, \quad \forall n\geq n_K.\end{align}
Hence $z\in C^{2s+\frac{\beta}{2}}(\O)$ and $z_n$ converges to $z$ in $C_{loc}^{2s+\frac{\beta}{2}}(\Omega)$; furthermore, $0\leq z\leq 1$ and it  satisfies that
\begin{align*}
\left\lbrace 
\begin{array}{rcll}
(-\Delta)^{s}z&=&(-\l+\mu) z&\text{ in } \O,\\
z&\geq&0&\text{ in }\O,\\
z&=&0 &\text{ in }\ren\backslash \O.\\
\end{array}\right.
\end{align*}
Let $(x_n)_{n\geq 1}$ be a sequence in $\O$ such that $M_n=u_n(x_n)$, then  $1=\lim_{n\to \infty}z_n(x_n)=z(\bar{x})$ and then $\bar{x}\in \O$. Hence, $z>0$ and so $\mu=\l_1^s(\O)+\l$, $z=\varphi_1^s$,  and the Lemma is proved.
\end{proof}

\

\subsection{Proof of Theorem~\ref{exit p small}.} 

\begin{proof}[Proof of Theorem~\ref{exit p small}]
{\sl Step 1. We prove the nondegeneracy.}\\  Assume by contradiction that there exists a non trivial solution $h_n$ of the linearized problem \eqref{linearized} with $p=p_n>1$ and $p_n\to1$: 
\begin{align}
\label{linearizedproblemFonNondegeneracy}
\left\lbrace 
\begin{array}{rcll}
(-\Delta)^{s} h_n&= & - \l h_n + p_nu_n^{p_n-1}h_n\ =:g_n & {\text{ in }} \O,\\
h_n &= & 0 & {\text{ in }}\mathbb R^N\backslash \O, \\
\end{array}\right.
\end{align}
w.l.o.g. we may assume that $\|h_n\|_{L^\infty}=1$. Observe  that 
\[ 
\|g_n\|_{L^\infty}\leq |\lambda|+2\|u_n\|^{p_n-1}_{L^\infty}=|\lambda|+2M_n^{p_n-1}\leq C.
\]
So, by Lemma~\ref{threg},  $h_n\to h$  uniformly on $\Omega$; in particular $\|h\|_{L^\infty}=1$ and so $h\neq 0$.  
Furthermore, by  Lemma~\ref{threg}, $h_n  \in C^\infty(\Omega)\cap C^{s}(\R^N)\cap L^\infty(\R^N)$.
Taking $h_n$ as a test function, by Lemma~\ref{pclose1},  we derive that
\[\|h_n\|_s^2\leq\left(|\lambda|+2M_n^{p_n-1}\right)\|h_n\|_{L^2}^2\leq \left(|\lambda|+2M_n^{p_n-1}\right)\|h_n\|_{L^\infty}^2=|\lambda|+2M_n^{p_n-1}\leq C.\]
Hence, $h_n$ converges to $h$ also weakly in  $\cH^s_0(\Omega)$, up to a subsequence, and strongly in $L^2(\O)$. Passing to the limit, in the weak formulation of \eqref{linearizedproblemFonNondegeneracy}, 
we obtain that $h$ is a weak solution of
\begin{align*}
\left\lbrace 
\begin{array}{rcll}
(-\Delta)^{s} h&= &  \l_1^s(\O) h  & {\text{ in }} \O,\\
h &= & 0 & {\text{ in }}\RR\backslash \O, \\
\end{array}\right.
\end{align*}
since, by Lemma~\ref{pclose1}, one has that
\begin{equation}\label{limitOfunp}
p_nu_n^{p_n-1}=p_nM_n^{p_n-1}\left(\frac{u_n}{M_n}\right)^{p_n-1}=\lambda^s_1(\O)+\lambda+o(1)\quad \text{pointwisely in $\O$ as }n\to\infty\end{equation}
and
\begin{align}\label{limit2}
\|p_nu_n^{p_n-1}\|_{L^\infty}\leq \lambda^s_1(\O)+|\lambda|+1\qquad \text{ for all }n\in\N.
\end{align}
As a consequence, we may assume that $h=\varphi_1^s$, where $\varphi_1^s$ is the first Dirichlet eigenfunction of $(-\Delta)^s$ in $\Omega$. Note that $h_n$ must change sign, because
\begin{equation}\label{segno} 0=\int_\O h_n[(-\Delta)^{s}u_n+\l u_n]-u_n[(-\Delta)^{s}h_n+\l h_n]\, dx=(1-p_n)\int_\O u_n^{p_n}h_n\, dx.\end{equation}
By  \eqref{segno}, Lemma~\ref{pclose1}, and dominated convergence, we have that
\[0=\frac{1}{M_n^{p_n}}\int_\O u_n^{p_n}h_n\, dx=\int_{\O} \left(\frac{u_n}{M_n}\right)^{p_n-1}\frac{u_n}{M_n}h_n=\int_{\O}(\varphi_1^s)^2+o(1)\qquad \text{ as }n\to\infty,\]
which leads to a contradiction.

\medskip

{\sl Step 2. We prove the uniqueness.} \\
By contradiction, assume that $u_n$ and $v_n$ are two distinct solutions of problem \eqref{main} with $p=p_n>1$ and $p_n\to1$.  The functions $w_n:= \frac{u_n-v_n}{\|u_n-v_n\|_{L^\infty}}$ satisfy that
\begin{align}\label{w_neq}
\left\{
\begin{array}{ll}
(-\Delta)^{s}w_n=\alpha_n(x)w_n-\l w_n=:g_n &\text{ in } \O,\\
w_n=0&\text{ in }\ren\backslash \O,
\end{array}
\right.
\end{align}
where, by the Mean Value Theorem,
\begin{align*}
\alpha_n:=\int_0^1 p_n(t u_n +(1-t) v_n)^{p_n-1}\, dt.
\end{align*}
Since $t u_n(x) +(1-t) v_n(x)$ is between $u_n(x)$ and $v_n(x)$,  it follows that $\alpha_n(x)$ is between $p_nu_n(x)^{p_n-1}$ and $p_nv_n(x)^{p_n-1}$ for all $x\in\O$. Since, by \eqref{limitOfunp} and \eqref{limit2}, $\|\alpha_n\|_{L^\infty}\leq \l_1^s(\O)+|\lambda|+1$ for all $n\in\N$  and 
$p_nu_n^{p_n-1}\to \l_1^s(\O)+\l$, $p_nv_n^{p_n-1}\to \l_1^s(\O)+\l$ pointwisely in $\O$ as $n\to\infty$,
we have that
\begin{align*}
\alpha_n(x)\rightarrow \l_1^s(\O)+\l\text{ pointwisely in $\Omega$ as } n\to\infty.    
\end{align*}
So $\|g_n\|_{L^\infty}\leq C$, hence by Lemma~\ref{threg}, $w_n\to w$ uniformly in $\overline{\O}$;
 in particular $\|w\|_{L^\infty}=1$ and so $w\neq 0$.
 Furthermore, testing \eqref{w_neq} with $w_n$,
\[\|w_n\|_s^2\leq \left(|\lambda|+\|\alpha_n\|_{L^\infty}\right)\|w_n\|_{L^2}^2\leq \left(|\lambda|+\|\alpha_n\|_{L^\infty}\right)\|w_n\|_{L^\infty}^2=|\lambda|+\|\alpha_n\|_{L^\infty}\leq C.\]
Hence, $w_n$ converges to $w$ also weakly in  $\cH^s_0(\Omega)$, up to a subsequence, and strongly in $L^2(\O)$.

Passing to the limit in the weak formulation of \eqref{w_neq} we obtain that $w$ is a weak solution of
\begin{align*}
\left\lbrace 
\begin{array}{rcll}
(-\Delta)^{s}w&=&\l_1^s(\O)w&\text{ in } \O,\\
w&\neq&0&\text{ in } \O,\\
w&=&0 &\text{ in }\ren\backslash\O.
\end{array}\right.
\end{align*}
Hence, $w=\varphi_1^s$ the first eigenfunction associated to $\l_1^s(\O)$. Let us observe that also 
\begin{align}\label{contra22}
\frac{w_n}{\d^s}\to \frac{\varphi_1^s}{\d^s}\, \text{ in } C^{\beta}(\overline{\O}),
\end{align}
for some $\beta\in (0,s)$ . Indeed, by Lemma~\ref{threg}, one also has that
$
\frac{w_n}{\d^s}\to \xi \text{  in } C^{\beta}(\overline{\O}),
$ for $\beta\in (0,\alpha)$,
and it is easy to see that $\xi=\frac{\varphi_1^s}{\delta^s}$, because the uniform convergence of $w_n$ to $\varphi_1^s$ implies that $\xi|_{\Omega}\equiv\frac{\varphi_1^s}{\delta^s}|_{\Omega}$, and  both $\xi|_{\Omega}$ and $\frac{\varphi_1^s}{\delta^s}|_{\Omega}$ can be uniquely extended in $\overline\O$.

Furthermore, we show that $w_n$ must change sign in  $\O$. Otherwise, if $w_n\geq 0$, namely $u_n\geq v_n$, then  $u_n^{p_n-1}- v_n^{p_n-1}\geq 0$ (since $p_n>1$), and so   the  equality 
\[0=\int_\O v_n[(-\Delta)^{s}u_n+\l u_n]-u_n[(-\Delta)^{s}v_n+\l v_n]\, dx
=\int_{\O}v_nu_n \left(u_n^{p_n-1}-v_n^{p_n-1}\right)\, dx \]
would imply  $u_n^{p_n-1}\equiv v_n^{p_n-1}$, namely,  $u_n\equiv v_n$, a contradiction.

Let now $(x_n)_{n\geq 1}$ be a sequence in $\O$ such that $w_n(x_n)=\min_{x\in\overline{\O}}w_n(x)$. Then, since $w_n$ changes sign in $\Omega$ and $w_n\to\varphi_1^s>0$ uniformly in $\overline\O$, one has that
\begin{align*}
 w_n(x_n)< 0 \quad \text{  and  }\quad  x_n\to x^* \in\partial\O.
\end{align*}
Using \eqref{contra22}, $\lim_{n\to\infty}\frac{w_n}{\d^s}(x_n)=\frac{\varphi_1^s}{\d^s}(x^*)$; hence,
$\frac{\varphi_1^s}{\d^s}(x^*)\leq 0,$ which gives a contradiction, since $\frac{\varphi_1^s}{\d^s}> 0$ on $\partial\O$, by the fractional Hopf Lemma (see \cite{GS16, FJ15}). 
\end{proof}


\section{The asymptotically local case} \label{sec:l}

Recall that $\lambda_1^s(\Omega)>0$ denotes the first Dirichlet eigenvalue of $(-\Delta)^s$ in $\Omega$ for $s\in(0,1]$.  In this section we fix 
\begin{align*}
p\in(1,2^*-1),\qquad \lambda>-\lambda_1^1(\Omega),
\end{align*}
and $\Omega\subset \R^N$ such that the problem
\begin{align}\label{c}
    -\Delta u +\lambda u&= u^{p}\quad \text{ in }\Omega,\qquad u=0\quad \text{ on }\partial \Omega,
\end{align}
has a unique positive solution and it is nondegenerate, namely the linearized problem
\begin{align}\label{Omega}
    -\Delta v +\lambda v&= pu^{p-1}v,\qquad v\in H^1_0(\Omega),\qquad \text{only admits the trivial solution.}
\end{align}

Since $\lim_{s\to 1}\lambda_1^s(\Omega)=\lambda_1^1(\Omega)$ (see Remark \ref{rmk:e}), there is $\sigma_0\in(\frac{1}{2},1)$ such that, for all $s\in [\sigma_0,1]$,
\begin{align}\label{s}
p<\frac{N+2s}{N-2s}\qquad \text{ and }\qquad  \lambda>-\lambda_1^s(\Omega).
\end{align}

For $s\in [\sigma_0,1]$, let ${\mathcal M}_s$ denote the set of positive nontrivial solutions $u_s$ of 
\begin{align}\label{le:eq}
    (-\Delta)^s u_s +\lambda u_s= u_s^p\quad \text{ in }\Omega,\qquad u_s=0\quad \text{ in }\R^N\backslash \O.
\end{align}

The following result gives a uniform a priori bound for all positive solutions whenever $s$ is close to 1.  The proof follows a blow-up argument similar to that of Lemma~\ref{pclose1}; however, since now the blow-up parameter is the Laplacian's exponent $s$, special care is required to control the constants appearing from regularity estimates.  In particular, a priori $C_{loc}^{2s+\eps}$  regularity estimates with explicit constants do not seem to be available in the literature and are nontrivial (see Remark \ref{rmk:c} below).  To overcome this obstacle, we use the lower order $C^s$ estimates with explicit constants shown in \cite{JSW20} and use the regularity theory for distributional solutions. 

\begin{theorem}\label{thm:unif}
Let $N\geq 2$, $\Omega$ be a bounded domain of class $C^2$, $1<p<2^*-1$, $\lambda>-\lambda_1^1(\Omega)$. There is $\sigma\in(0,1)$ and a constant $C=C(\lambda,p,\Omega,\sigma)>0$ such that,
\begin{align*}
\|u_s\|_{L^\infty}<C\qquad \text{for all $s\in(\sigma,1)$ and $u_s\in {\mathcal M}_s$.}
\end{align*}
\end{theorem}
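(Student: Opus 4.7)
\textbf{Proof plan for Theorem~\ref{thm:unif}.} I would argue by contradiction via a blow-up analysis that mirrors \emph{Step 1} of Lemma~\ref{pclose1}, with the roles of the asymptotic parameters reversed: here $p$ is fixed and $s\to 1^-$ is the blow-up parameter. Negating the thesis produces sequences $s_n\in(1-\tfrac1n,1)$ and $u_n\in\cM_{s_n}$ with $M_n:=\|u_n\|_{L^\infty}=u_n(x_n)\to\infty$; in particular $s_n\to 1$. Setting $\mu_n:=M_n^{(1-p)/(2s_n)}\to 0$ and $v_n(y):=M_n^{-1}u_n(\mu_n y+x_n)$, the rescaling yields
\[
(-\Delta)^{s_n}v_n = v_n^{p}-\lambda M_n^{1-p}v_n\quad\text{in }\Omega_n:=\mu_n^{-1}(\Omega-x_n),\qquad v_n=0\text{ in }\R^N\setminus\Omega_n,
\]
with $0\leq v_n\leq 1$, $v_n(0)=1$, and $\lambda M_n^{1-p}\to 0$ since $p>1$.

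Exactly as in Lemma~\ref{pclose1}, I would split according to whether $\operatorname{dist}(x_n,\partial\Omega)\mu_n^{-1}\to+\infty$ (interior blow-up, $\Omega_n\to\R^N$) or stays bounded (boundary blow-up). In the latter case I recenter at the projection $\xi_n\in\partial\Omega$ of $x_n$, rescale to $w_n(y):=M_n^{-1}u_n(\mu_n y+\xi_n)$ on $D_n:=\mu_n^{-1}(\Omega-\xi_n)\to\R^N_+$, and note that $w_n(y_n)=1$ at $y_n:=(x_n-\xi_n)/\mu_n$. To prevent $y_n$ from collapsing onto $\partial D_n$, I would apply Lemma~\ref{lem:unif} together with Remark~\ref{rmk:bdry} to the scaled domains $D_n$ (which, since $\mu_n^{-1}>1$, satisfy a uniform exterior-sphere condition with the original radius): the uniform bound on $\|w_n\|_{L^\infty}+\|(-\Delta)^{s_n}w_n\|_{L^\infty(D_n)}$ produces $\delta_0>0$ with $|w_n|<1/2$ on $\{\operatorname{dist}(\cdot,\partial D_n)<\delta_0\}$, forcing $\operatorname{dist}(y_n,\partial D_n)\geq\delta_0$ and hence $\rho:=\lim\operatorname{dist}(x_n,\partial\Omega)\mu_n^{-1}>0$. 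In either scenario the rescaled maxima remain in a fixed compact subset of $\R^N$ or of the open half-space.

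The main technical obstacle is extracting a smooth blow-up limit with constants uniform in $s_n$, because the usual $C^{2s+\eps}_{\text{loc}}$-estimates have constants that may degenerate as $s\to 1$ (cf.\ Remark~\ref{rmk:c}). I would bypass this using the $s$-uniform interior $C^{s}$-estimate of Lemma~\ref{lem:int:reg}: together with $0\leq v_n\leq 1$ and the uniform $L^\infty$-bound on $(-\Delta)^{s_n}v_n$, it yields uniform local $C^{s_n}$-seminorm bounds, hence uniform equicontinuity on compacta (since $s_n>\sigma_0>0$). Arzel\`a--Ascoli and a diagonal argument then produce a locally uniform limit $v_n\to v$ (resp.\ $w_n\to w$) with $0\leq v\leq 1$, $v(0)=1$ (resp.\ $w(y_0)=1$ at an interior point $y_0\in\R^N_+$); in the boundary case Lemma~\ref{lem:unif}/Remark~\ref{rmk:bdry} additionally give uniform decay up to $\partial\R^N_+$, delivering the zero Dirichlet trace of $w$.

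To identify the limit equation I would test the distributional formulation against $\varphi\in C_c^\infty(\R^N)$, transfer the fractional Laplacian onto $\varphi$ by symmetry, and let $n\to\infty$ using the classical fact that $(-\Delta)^{s_n}\varphi\to-\Delta\varphi$ uniformly on compact sets; together with the locally uniform convergence $v_n^p\to v^p$ and $\lambda M_n^{1-p}v_n\to 0$, this shows that $v$ (resp.\ $w$) is a bounded nonnegative nontrivial distributional solution of $-\Delta v=v^p$ in $\R^N$ (resp.\ in $\R^N_+$ with zero Dirichlet data), hence a classical solution by standard elliptic regularity. Since $1<p<2^*-1$ is subcritical, the Gidas--Spruck Liouville theorem on $\R^N$ and its half-space counterpart rule out such nontrivial solutions, producing the desired contradiction and therefore the uniform bound.
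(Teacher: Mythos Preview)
Your proposal is correct and follows essentially the same route as the paper: the same blow-up rescaling, the same interior/boundary dichotomy handled via Lemma~\ref{lem:unif} and Remark~\ref{rmk:bdry}, the use of the $s$-uniform $C^s$-estimate from Lemma~\ref{lem:int:reg} to bypass the degeneration of higher-order constants (cf.\ Remark~\ref{rmk:c}), passage to a distributional limit equation for $-\Delta$, upgrade to a classical solution via local elliptic regularity, and the Gidas--Spruck Liouville theorems in $\R^N$ and $\R^N_+$ to reach the contradiction. The only cosmetic difference is that the paper computes the tail integral in Lemma~\ref{lem:int:reg} explicitly via the beta function to exhibit the uniform bound, whereas you leave this implicit.
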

\begin{proof}
We argue by contradiction.  Let $(s_n)\subset (\frac{1}{2},1)$ be such that $\lim_{n\to\infty}s_n=1$, $p\in(1,2^*-1)$, $\lambda>-\lambda_1^1(\Omega)$, and assume that there are positive solutions $u_n\in C^\infty(\Omega)\cap C^{s}(\R^N)\cap L^\infty(\R^N)$ of \eqref{le:eq} with $s=s_n$ and such that $M_n:=\|u_n\|_{L^\infty}\to\infty$.  Let $x_n$ be the points where the maximum of $u_n$ is achieved, $\mu_n:=M_n^\frac{1-p}{2s_n}$, and let $
    v_n(y):=\frac{1}{M_n}u_n(\mu_n y+x_n)$ and $\Omega_n:=\{y\in \Omega\::\: \mu_n x+x_n\in \Omega\}.$ Then $v_n$ is a function satisfying that $0 \leq v_n\leq 1$, $v_n(0) = 1$, and
\begin{align}
\label{v_n eqs:2}
\begin{array}{lr}
(-\Delta)^{s_n} v_n=v_n^{p}-\frac{\l}{M_n^{p-1}}v_n=:f_n\quad \text{ in } \O_n,\qquad v_n=0 \text{ in }\ren\backslash \O_n.
 \end{array}
\end{align}
Passing to a subsequence, we consider two cases. 

\medskip

\underline{Case 1: $\lim_{n\to\infty}\operatorname{dist}\left(x_{n},\,\partial\O\right) \mu_{n}^{-1} =\infty$}.
Then $\Omega_n \rightarrow \ren$ as $n \rightarrow+\infty$ and, by Lemma~\ref{lem:int:reg}, for any $r>1$ with
$B_r(0)\subset \Omega_n$, there is $C=C(N)>0$ such that
\begin{align*}
&[v_n]_{C^{s_n}(B_{\frac{r}{2}})}\\
&\leq r^{s_n} C
\left(
\left\|v_n^p - \frac{\lambda}{M_n^{p-1}} v_n\right\|_{L^\infty(B_r)}
+\frac{2}{\Gamma(s_n)\Gamma(1-s_n)|\mathbb S^{N-1}|}
\int_{\R^N\backslash B_r}\frac{|v_n(z)|}{|z|^N(|z|^2-r^2)^{s_n}}\, dz
\right)\\
&\leq 2(1+|\lambda|)r^{s_n} C
\left(1+\frac{1}{\Gamma(s_n)\Gamma(1-s_n)}
\int_{r}^\infty\frac{1}{\rho(\rho^2-r^2)^{s_n}}\, d\rho
\right)= 2(1+|\lambda|)r^{s_n} C
\left(1+\frac{1}{2r^{2 s_n}}
\right),
\end{align*}
were we used that, by the properties of the beta function $B(\cdot,\cdot)$,
\begin{align*}
    \int_r^\infty \rho^{-1} (\rho^2-r^2)^{-s_n} \, d\rho
 &=r^{-2s_n}\int_1^\infty t^{-1} (t^2-1)^{-s_n} \, dt=\frac{r^{-2s_n}}{2}\int_1^\infty (\tau-1)^{-s_n}\tau^{-1}\, d\tau\\
    &=\frac{r^{-2s_n}}{2}\int_0^1 (\zeta^{-1}-1)^{-s_n}\zeta^{-1}\, d\zeta=\frac{r^{-2s_n}}{2}\int_0^1 (1-\zeta)^{-s_n}\zeta^{s_n-1}\, d\zeta\\
    &=\frac{r^{-2s_n}}{2}B(s_n,1-s_n)=\frac{r^{-2s_n}}{2}\frac{\Gamma(s_n)\Gamma(1-s_n)}{\Gamma(1)}.
\end{align*}

Using that $s_n$ is an increasing sequence, we deduce that
\begin{align*}
[v_n]_{C^{s_1}(B_{\frac{r}{2}})}
\leq r^{s_1-s_n}[v_n]_{C^{s_n}(B_{\frac{r}{2}})}
\leq 4(1+|\lambda|)C r^{2}.
\end{align*}
This uniform bound and a diagonalization argument yields the existence of $v\in C^{\beta}_{loc}(\R^N)$ such that 
$v_n\to v$ in $C^{\beta}_{loc}(\R^N)$ for any $\beta\in(0,1)$.  Moreover, by \eqref{v_n eqs:2}, $v$ satisfies that $v(0)=1$, $v\geq 0$ in $\R^N$, and 
\begin{align*}
    \int_{\R^N} v(-\Delta)\varphi\, dx=\int_\Omega v^{p} \varphi\, dx
    \qquad \text{for all $\varphi\in C^\infty_c(\R^N)$},
\end{align*}
namely, $v\in L^\infty_{loc}(\R^N)$ is a distributional solution of
\begin{align}\label{l}
-\Delta v=v^{p}\quad \text{ in $\R^N$.}
\end{align}
By the regularity theory for distributional solutions in $L^2_{loc}(\R^N)$ (see, for example, \cite[Theorem 1]{AF20}), we have that $v\in W^{2,2}_{loc}(\R^N)$ solves \eqref{l} weakly, and since $v\in C^\beta_{loc}(\R^N)$, standard elliptic regularity yields that $v$ is a classical solution of \eqref{l} which is positive by the maximum principle. But this contradicts the classical Liouville theorem in the whole space (see \cite[Theorem 1.1]{GidasSpruckCPAM1981}). 

\medskip

\underline{Case 2: $\operatorname{dist}\left(x_{n},\,\partial\O\right) \mu_{n}^{-1} \rightarrow d \geq 0$}. We may assume $x_{n} \rightarrow x_{0} \in \partial \Omega .$ With no loss of generality assume also $\nu\left(x_{0}\right)=-e_{N}$. In this case, we use the function
$w_{n}(y)=u_{n}\left(\mu_{n} y+\xi_{n}\right)M_n^{-1}$ for $y \in D_{n}$, where $\xi_{n} \in \partial \Omega$ is such that $|x_{n}-\xi_n|=\operatorname{dist}(x_n,\partial \Omega)$ and $
D_{n}:=\left\{y \in \mathbb{R}^{N}: \mu_{n} y+\xi_{n} \in \Omega\right\}$. Then $w_{n}$ satisfies that
\begin{align}
\label{v_n eq2:2}
(-\Delta)^{s_n} w_n=w_n^{p}-\frac{\l}{M_n^{p-1}}w_n \quad \text{ in } D_n,\qquad
 w_n=0 \text{ in }\ren\backslash D_n.
\end{align}
Moreover, setting $y_{n}:=(x_{n}-\xi_{n})\mu_{n}^{-1}$, it follows that $\left|y_{n}\right|=\operatorname{dist}\left(x_{n},\,\partial\O\right) \mu_{n}^{-1}$ and $w_{n}\left(y_{n}\right)=1$. Since $\Omega$ is of class $C^2$, it satisfies a uniform exterior sphere condition, namely, there is $r_0>0$ such that, for every $x_0\in \partial \Omega$, there is $y_0\in \R^N\backslash \Omega$ with $\overline{B_{r_0}(y_0)}\cap \overline{\Omega} = \{x_0\}$. Furthermore, since $\mu_n^{-1}\to \infty$, $D_n$ also satisfies the exterior sphere condition with the same $r_0$ for every $n\in\N$. Then, by Lemma~\ref{lem:unif} and Remark \ref{rmk:bdry} (with that $r_0$ independent of $n$, $\sigma=\frac{1}{2},$ and $M=3$) there is $\delta_0>0$ independent of $n$ such that $w_n(y)<\frac{1}{2}$ for all $y\in D_n$ with $\operatorname{dist}\left(y,\,\partial D_n\right)<\delta_0$. Since $w_n(y_n)=1$, this implies that $\operatorname{dist}\left(y_n,\,\partial D_n\right)\geq \delta_0$, namely, 
$\rho=\operatorname{dist}\left(x_{n},\,\partial\O\right) \mu_{n}^{-1}>\delta_0$.  In particular, passing to a subsequence, $y_{n} \rightarrow y_{0}$, where $\left|y_{0}\right|=\rho>0,$ thus $y_{0}$ is an interior point of the half-space $\R^N_+$.  Finally, arguing similarly as in the first case, we obtain that $w_{n} \rightarrow w$ in $C^{\beta}_{loc}(\ren_{+})$ for all $\beta\in (0,1)$, where $w$ is a classical positive solution of $-\Delta w=w^{p}$ in $\R^N_+$.  Moreover, by Lemma~\ref{lem:unif}, $w\in C(\overline{\R^N_+})$ and $w=0$ on $\partial \R^N_+$; but this contradicts the classical Liouville theorem in the halfspace (see \cite[Theorem 1.3]{GidasSpruckCPDE1981}).
\end{proof}

\begin{remark}\label{rmk:c}
The limit as $s\to 1^-$ is delicate, because it is the transition between the nonlocal and the local regime. In the proof of Theorem~\ref{thm:unif}, we only use uniform (in $s$) lower order regularity estimates. This yields distributional solutions of \eqref{l} in the limit, which, using local arguments, can then be shown to be regular. Comparing this argument with the proof of Lemma~\ref{pclose1}, one may think that using uniform higher order regularity estimates would be simpler to obtain directly a classical solution of \eqref{l}. These precise higher order regularity estimates are not yet available in the literature and require a careful analysis of the constants involved in the known regularity arguments for the fractional Laplacian, where the explicit dependence on $s$ is often disregarded. We point out that this can be subtle issue, since several constants and integrals involved in the analysis of fractional problems have a singular behavior in the nonlocal-to-local transition, namely, when $s\to 1^-$.  For instance, for $N=2$ and $s\in(0,1)$, the fundamental solution for $(-\Delta)^s$ in $\R^2$ is given by  
\begin{align}\label{c:def}
 F_{2,s}(x):=a_{s} |x|^{2s-2}\quad \text{ for $x\in \R^2\setminus\{0\}$},\qquad a_{s}:=\frac{\Gamma(1-s)}{4^{s}\pi\Gamma(s)},
\end{align}
and the constant $a_{s}$ blows up as $s\to 1^-$. As a consequence, regularity estimates that use the fundamental solution (see \emph{e.g.} \cite[Proposition 2.8]{silver}) need to be refined to obtain uniform constants in the limit as $s\to 1^-$.
\end{remark}

The following Lemma is one of our main asymptotic tools. It exploits the uniform regularity estimates given in Lemma \ref{threg} and describes the properties of the limiting profile.

\begin{Lemma}\label{Fatou:arg}
Let $\Omega\subset \R^N$ be a bounded domain of class $C^2$, $(s_n)\subset(0,1)$, $\lim_{n\to\infty}s_n=1$, and let $(u_n)\subset C^\infty(\Omega)\cap \cH^{s_n}_0(\Omega)$ be a sequence such that
\begin{align}\label{C:def}
\|u_n\|_{s_n}+\|(-\Delta)^{s_n}u_n\|_{L^\infty(\Omega)}<C\qquad \text{ for all $n\in \N$ and for some $C>0$.}
\end{align} Then, passing to a subsequence, there is $u^*\in H_0^1(\Omega)\cap C^\beta(\overline{\Omega})$ for all $\beta\in(0,1)$ such that 
$u_n\to u^*$ in $C^\beta(\Omega)$ as $n\to \infty$
and 
\begin{align*}
\int_\Omega \nabla u^* \nabla \varphi\, dx 
=\lim_{n\to\infty}\int_\Omega (-\Delta)^{s_n} u_n\, \varphi\, dx\qquad \text{ for all }\varphi\in C^\infty_c(\Omega).
\end{align*}
\end{Lemma}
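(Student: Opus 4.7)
The plan is to extract a Hölder-continuous limit $u^*$ via a uniform Hölder estimate plus Arzelà-Ascoli, then promote $u^*$ to an element of $H^1_0(\Omega)$ using the Bourgain-Brezis-Mironescu framework, and finally pass to the limit in the weak formulation by shifting the fractional Laplacian onto the smooth test function.

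First, I would apply Lemma~\ref{threg}; the key point is that the constant $C(\Omega)$ there is independent of $s$, so the bound $\|(-\Delta)^{s_n} u_n\|_{L^\infty(\Omega)}<C$ immediately yields a uniform estimate $\|u_n\|_{C^{s_n}(\overline\Omega)}\le C'$. Since $s_n\to 1$, for every fixed $\beta\in(0,1)$ and every $n$ large enough that $s_n>\beta$, the standard Hölder interpolation
\[
[u_n]_{C^\beta(\overline\Omega)}\le(\operatorname{diam}\Omega)^{s_n-\beta}[u_n]_{C^{s_n}(\overline\Omega)}
\]
gives a uniform $C^\beta(\overline\Omega)$ bound. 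A diagonal Arzelà-Ascoli argument with $\beta\nearrow 1$ then extracts a subsequence (still denoted $u_n$) such that $u_n\to u^*$ in $C^\beta(\overline\Omega)$ for every $\beta\in(0,1)$; the common vanishing of the $u_n$ outside $\Omega$ passes to $u^*$, so $u^*\in C^\beta(\overline\Omega)$ and is extended by zero.

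Next, I would show $u^*\in H^1_0(\Omega)$. Uniform convergence on $\overline\Omega$ together with the common vanishing outside gives $u_n\to u^*$ in $L^2(\R^N)$. The hypothesis $\|u_n\|_{s_n}<C$, combined with the explicit form of $c_{N,s_n}$ which carries the factor $s_n(1-s_n)$, encodes exactly the Bourgain-Brezis-Mironescu scaling
\[
s_n(1-s_n)\iint_{\R^N\times\R^N}\frac{|u_n(x)-u_n(y)|^2}{|x-y|^{N+2s_n}}\,dx\,dy\le C''.
\]
Invoking Ponce's lower semicontinuity refinement of BBM then produces $u^*\in H^1(\R^N)$ with $\int_{\R^N}|\nabla u^*|^2\le\liminf_n\tfrac{c_{N,s_n}}{2}[u_n]^2_{H^{s_n}(\R^N)}$, and together with $u^*=0$ a.e.\ outside $\Omega$ this delivers $u^*\in H^1_0(\Omega)$.

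Finally, to pass to the limit in the identity, fix $\varphi\in C^\infty_c(\Omega)\subset\mathcal{H}^{s_n}_0(\Omega)$ and exploit the symmetry of the bilinear form \eqref{bypart}:
\[
\int_\Omega(-\Delta)^{s_n}u_n\,\varphi\,dx=\frac{c_{N,s_n}}{2}\iint_{\R^N\times\R^N}\frac{(u_n(x)-u_n(y))(\varphi(x)-\varphi(y))}{|x-y|^{N+2s_n}}\,dx\,dy=\int_\Omega u_n\,(-\Delta)^{s_n}\varphi\,dx.
\]
For $\varphi\in C^\infty_c(\R^N)$, Fourier inversion and dominated convergence (via the integrable majorant $\bigl||\xi|^{2s_n}-|\xi|^2\bigr|\,|\widehat\varphi(\xi)|\le 2(1+|\xi|^2)|\widehat\varphi(\xi)|$) yield $(-\Delta)^{s_n}\varphi\to-\Delta\varphi$ uniformly on $\overline\Omega$; combined with the uniform convergence $u_n\to u^*$ on $\overline\Omega$, the right-hand side tends to $\int_\Omega u^*(-\Delta\varphi)\,dx=\int_\Omega\nabla u^*\cdot\nabla\varphi\,dx$, the last equality being classical integration by parts for $u^*\in H^1_0(\Omega)$ and $\varphi\in C^\infty_c(\Omega)$. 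The main obstacle I anticipate is the second step, establishing $u^*\in H^1_0(\Omega)$ from the BBM-scaled Gagliardo bound: although the scaling is obviously correct, extracting an $H^1$ limit (rather than only a weaker $BV$ or $W^{1,p}$ limit with $p<2$) requires the full strength of Ponce's refinement of BBM and careful bookkeeping of the $(1-s_n)$-factor buried inside $c_{N,s_n}$.
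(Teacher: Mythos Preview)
Your argument is correct, and your Steps~1 and~3 coincide with the paper's proof almost verbatim. The only genuine difference is in Step~2, where you invoke the Bourgain--Brezis--Mironescu/Ponce machinery to upgrade the uniform bound on $\|u_n\|_{s_n}$ to $u^*\in H^1_0(\Omega)$. The paper bypasses this entirely with a one-line Fatou argument on the Fourier side: from $u_n\to u^*$ in $L^2(\R^N)$ one has $\widehat{u_n}\to\widehat{u^*}$ in $L^2$, hence pointwise a.e.\ along a subsequence, and then
\[
\int_{\R^N}|\xi|^2|\widehat{u^*}(\xi)|^2\,d\xi
\;\le\;\liminf_{n\to\infty}\int_{\R^N}|\xi|^{2s_n}|\widehat{u_n}(\xi)|^2\,d\xi
\;=\;\liminf_{n\to\infty}\|u_n\|_{s_n}^2<C,
\]
using the Plancherel identity $\tfrac{c_{N,s}}{2}[u]_{H^s}^2=\int|\xi|^{2s}|\widehat u|^2\,d\xi$. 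This is precisely why the lemma is labeled \texttt{Fatou:arg}. So the step you flagged as ``the main obstacle'' is in fact the cheapest one: the Fourier representation absorbs the $(1-s_n)$ factor inside $c_{N,s_n}$ automatically, and no BBM-type compactness or lower-semicontinuity result is needed. Your route is not wrong, just heavier than necessary.
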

\begin{proof}
By \eqref{C:def} and Lemma~\ref{threg}, there is $u^*:\Omega\to\R$ such that, given $\beta\in(0,1)$ and passing to a subsequence, $u_{n} \to u^*$ in $C^\beta(\Omega)$ (in particular, $u^*=0$ on $\partial \Omega$). Moreover, $u^*\in H^1_0(\Omega)$ since, by Fatou's Lemma, 
 \begin{align*}
\|u^*\|_{1}^2
=\int_{\R^N}|\xi|^{2}|\widehat u^*(\xi)|^2\, d\xi
\leq \liminf_{n\to\infty}\int_{\R^N}|\xi|^{2s_n}|\widehat u_{s_n}(\xi)|^2\, d\xi
= \liminf_{n\to\infty}\|u_{s_n}\|_{s_n}^2
<C.
 \end{align*}
 Then, for every $\varphi\in C^\infty_c(\Omega)$, integrating by parts,
 \begin{align*}
 \int_\Omega \nabla u^* \nabla \varphi\, dx
 &=\int_\Omega u^* (-\Delta)\varphi\, dx
 =\lim_{n\to\infty}\int_\Omega u^* (-\Delta)^{s_n}\varphi\, dx\\
& =\lim_{n\to\infty}\int_\Omega u_n (-\Delta)^{s_n} \varphi\, dx
 =\lim_{n\to\infty}\int_\Omega (-\Delta)^{s_n} u_n \varphi\, dx,
 \end{align*}
 where we used that $(-\Delta)^{s_n}\varphi \to (-\Delta)\varphi$ pointwisely as $n\to\infty$ and that $\|(-\Delta)^{s_n}\varphi\|_{L^\infty}$ is uniformly bounded independently of $n$, see for example \cite[Proposition 4.4]{DiNezza} and \cite[Lemma~B.5]{AJS18}.
\end{proof}

The following result is known for ground states, see, for instance, \cite[Theorem 4.5.]{BS20}, \cite[Theorem 1.2]{BS21}, or \cite[Theorem 1.1]{HS21} for critical equations (the argument can be easily adapted to subcritical problems).  We extend these results to general positive solutions. 

\begin{Lemma}\label{ustar:lem}
Let $1<p<2^*-1$, $\lambda>-\lambda_1^1(\Omega)$, $(s_n)\subset (0,1)$ be such that $\lim_{n\to\infty}s_n=1$, and let $u_{n}\in {\mathcal M}_{s_n}.$  Then, up to a subsequence, there is $u_*\in {\mathcal M}_1$ such that $u_{n}\to u_*$ in $L^{p+1}(\Omega)$.
\end{Lemma}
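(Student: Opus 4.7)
The plan is to combine the uniform $L^\infty$ bound from Theorem~\ref{thm:unif} with the asymptotic compactness of Lemma~\ref{Fatou:arg} to extract a nonnegative weak solution of the local limit problem \eqref{c}, and then to exclude the degenerate case $u_*\equiv 0$ via a test against the first fractional Dirichlet eigenfunction.

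To apply Lemma~\ref{Fatou:arg} I first verify its hypotheses. Theorem~\ref{thm:unif} gives $\sup_n\|u_n\|_{L^\infty}<\infty$ once $s_n$ is close enough to $1$, so $\|(-\Delta)^{s_n}u_n\|_{L^\infty(\Omega)}=\|u_n^p-\lambda u_n\|_{L^\infty(\Omega)}$ is uniformly bounded, while testing \eqref{le:eq} against $u_n$ controls $[u_n]_{H^{s_n}}^2=\int_\Omega u_n^{p+1}-\lambda\int_\Omega u_n^2$ and hence $\|u_n\|_{s_n}$. Lemma~\ref{Fatou:arg} then produces, along a subsequence, a limit $u_*\in H^1_0(\Omega)\cap C^\beta(\overline{\Omega})$ with $u_n\to u_*$ in $C^\beta(\Omega)$; since Lemma~\ref{threg} gives a uniform $C^{s_n}(\overline{\Omega})$ bound (with constant independent of $s_n$), Arzel\`a--Ascoli upgrades the convergence to uniform on $\overline{\Omega}$. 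Uniform convergence lets me pass to the limit on the right-hand side to obtain
\begin{equation*}
\int_\Omega\nabla u_*\nabla\varphi\,dx=\lim_{n\to\infty}\int_\Omega(-\Delta)^{s_n}u_n\,\varphi\,dx=\int_\Omega(u_*^p-\lambda u_*)\varphi\,dx
\end{equation*}
for every $\varphi\in C^\infty_c(\Omega)$; hence $u_*\geq 0$ weakly, and therefore classically by elliptic regularity, solves \eqref{c}.

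The main obstacle is to rule out $u_*\equiv 0$, since a priori nothing prevents the sequence from collapsing. My plan is to test \eqref{le:eq} against the first Dirichlet eigenfunction $\varphi_1^{s_n}$ of $(-\Delta)^{s_n}$ in $\Omega$ and use the self-adjointness identity \eqref{bypart} to move the operator, obtaining
\begin{equation*}
(\lambda_1^{s_n}(\Omega)+\lambda)\int_\Omega\varphi_1^{s_n}u_n\,dx=\int_\Omega\varphi_1^{s_n}u_n^p\,dx\leq\|u_n\|_{L^\infty}^{p-1}\int_\Omega\varphi_1^{s_n}u_n\,dx.
\end{equation*}
Since $\varphi_1^{s_n},u_n>0$ in $\Omega$ this forces $\|u_n\|_{L^\infty}^{p-1}\geq\lambda_1^{s_n}(\Omega)+\lambda$, and combining $\lambda_1^{s_n}(\Omega)\to\lambda_1^1(\Omega)$ (Remark~\ref{rmk:e}) with the assumption $\lambda>-\lambda_1^1(\Omega)$, the right-hand side is bounded below by a strictly positive constant. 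Uniform convergence then yields $\|u_*\|_{L^\infty}\geq(\lambda_1^1(\Omega)+\lambda)^{1/(p-1)}>0$. Finally, rewriting the equation as $(-\Delta+\lambda-u_*^{p-1})u_*=0$ with bounded coefficient and applying the strong maximum principle (equivalently, the elliptic Harnack inequality) to the nontrivial nonnegative $u_*$ gives $u_*>0$ in $\Omega$, so $u_*\in\mathcal{M}_1$; the $L^{p+1}$ convergence is immediate from the uniform convergence on the bounded set $\overline{\Omega}$.
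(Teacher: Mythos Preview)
Your proof is correct and, for the crucial step of ruling out $u_*\equiv 0$, takes a genuinely different and more self-contained route than the paper's. Both arguments begin identically: invoke the uniform $L^\infty$ bound from Theorem~\ref{thm:unif}, verify the hypotheses of Lemma~\ref{Fatou:arg}, extract a limit $u_*\in H^1_0(\Omega)$, and pass to the limit in the equation. The divergence is in showing $u_*\not\equiv 0$. The paper appeals to an external result (\cite[Theorem 4.5]{BS20}) asserting that \emph{least energy} solutions $v_n\in\mathcal M_{s_n}$ converge to some $v^*\in\mathcal M_1$, and then observes that on the Nehari manifold the energy equals $(\frac12-\frac{1}{p+1})\|\cdot\|_{L^{p+1}}^{p+1}$, so $\|u_n\|_{L^{p+1}}\geq\|v_n\|_{L^{p+1}}\to\|v^*\|_{L^{p+1}}>0$. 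Your eigenfunction test is more elementary: it requires no prior knowledge about least energy solutions, only the self-adjointness identity \eqref{bypart}, positivity of $\varphi_1^{s_n}$ and $u_n$, and the convergence $\lambda_1^{s_n}(\Omega)\to\lambda_1^1(\Omega)$ from Remark~\ref{rmk:e}. This yields a uniform lower bound $\|u_n\|_{L^\infty}^{p-1}\geq\lambda_1^{s_n}(\Omega)+\lambda$, which for large $n$ is bounded below by a fixed positive constant since $\lambda>-\lambda_1^1(\Omega)$. The paper's route has the advantage of directly producing an $L^{p+1}$ lower bound and fitting naturally with the framework in \cite{BS20,BS21}; your route avoids that dependency entirely and makes the lemma self-contained within the paper.
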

\begin{proof}
By Theorem~\ref{thm:unif}, passing to a subsequence, \eqref{s} holds for $s=s_n$ and there is $C=(\lambda,p,\Omega)>0$ such that 
\begin{align*}
\|u_{n}\|_{s_n}^2=\|u_{n}\|_{L^{p+1}}^{p+1}-\lambda \|u_{n}\|_{L^2}^2\leq |\Omega|(\|u_{n}\|^{p+1}_{L^\infty}+|\lambda|\|u_{n}\|_{L^\infty}^2)<C\qquad \text{ for all }n\in\N.
\end{align*}
By Lemma \ref{Fatou:arg}, there is $u^*\in H^1_0(\Omega)$ such that $u_n\to u^*$ in $L^\infty(\Omega)$ and, for every $\varphi\in C^\infty_c(\Omega)$,
 \begin{align*}
 \int_\Omega \nabla u^* \nabla \varphi\, dx
  =\lim_{n\to\infty}\int_\Omega (u_n^p-\lambda u_n) \varphi\, dx
 =\int_\Omega ((u^*)^p-\lambda u^*) \varphi\, dx.
 \end{align*}
 It remains to show that $u^*\not\equiv 0$. Let $v_{n}\in {\mathcal M}_{s_n}$ denote the \emph{least energy solution} of \eqref{main}; then, by \cite[Theorem 4.5.]{BS20}, (see also \cite[Theorem 1.2]{BS21}), we have that $v_n\to v^*$ in $L^{p+1}(\Omega)$ to some $v^*\in {\mathcal M}_1$, and therefore,
 \begin{align*}
 \|u^*\|_{L^{p+1}}^{p+1}
     &=\lim_{n\to \infty}
     \|u_{n}\|_{L^{p+1}}^{p+1}
     \geq \lim_{n\to\infty}
     \|v_{n}\|_{L^{p+1}}^{p+1}
     =
     \|v^*\|_{L^{p+1}}^{p+1}>0.
 \end{align*}
 Then $u_*\not\equiv 0$ and therefore $u_*\in {\mathcal M}_1$.
\end{proof}

\begin{remark}\label{rmk:e}
It is well known that $\lim_{s\to 1}\lambda^s_1(\Omega)=\lambda_1^1(\Omega)$, but we could not find a precise reference. In this Remark, we give a brief argument. Let $\varphi_1^s\in C^\infty(\Omega)\cap C^s(\R^N)$ denote the first eigenfunction associated to $\lambda^s_1(\Omega)$ for $s\in(0,1]$ normalized such that $\|\varphi_1^s\|_{L^\infty}=1$. Then, since $\varphi^1_1\in \cH^s_0(\Omega)$,
\begin{align*}
\lim_{s\to 1}\lambda_1^s(\Omega) = \lim_{s\to 1}\inf_{v\in \cH^s_0(\Omega)}\frac{\|v\|_s^2}{\|v\|^2_{L^2}}\leq \lim_{s\to 1}\frac{\|\varphi^1_1\|_s^2}{\|\varphi^1_1\|^2_{L^2}}
=\frac{\|\varphi^1_1\|_1^2}{\|\varphi^1_1\|^2_{L^2}}=\lambda_1^1(\Omega).
\end{align*}
In particular, $(\lambda^s_1(\Omega))$ is bounded and therefore $(\|\varphi_1^s\|_s+\|(-\Delta)^s\varphi_1^s\|_{L^\infty(\Omega)})$ is bounded as $s\to 1^-$ as $n\to\infty$. Let $s_n\to 1^-$. Then by Lemma \ref{Fatou:arg}, passing to a subsequence, $\varphi_1^{s_n}\to \varphi$ in $L^\infty(\Omega)$ as $n\to\infty$ for some $\varphi\in H^1_0(\Omega)$ and, by Fatou's Lemma,
\begin{align*}
\lambda_1^1 (\Omega)=\inf_{v\in H^1_0(\Omega)}\frac{\|v\|_1^2}{\|v\|^2_{L^2}}\leq \frac{\|\varphi\|_1^2}{\|\varphi\|^2_{L^2}}
\leq \liminf_{n\to \infty}\frac{\|\varphi^{s_n}_1\|_{s_n}^2}{\|\varphi^{s_n}_1\|_{L^2}^2}=\liminf_{n\to \infty}\lambda_1^{s_n}(\Omega).
\end{align*}
Since this can be done for any subsequence, we conclude that $\lim_{s\to 1}\lambda^s_1(\Omega)=\lambda_1^1(\Omega)$.
\end{remark}

\subsection{Proof of Theorem~\ref{thm:scto1}}

 \begin{proof}[Proof of Theorem~\ref{thm:scto1}]
 Let $N\geq 2$, $p\in(1,2^*-1)$, $\lambda>-\lambda_1^1(\Omega)$, and $\Omega$ be such that the problem \eqref{c:intro} has a unique positive solution which is nondegenerate. Then, for $s$ sufficiently close to 1, \eqref{s} holds. We argue first the nondegeneracy of solutions of \eqref{main}. By contradiction, let $(s_n)\subset (0,1)$ be a monotone increasing sequence with $\lim_{n\to \infty}s_n=1$, $u_{n}\in \mathcal{M}_{s_n}$,  and assume that there is a solution $h_{n}$ of 
\begin{align*}
    (-\Delta)^{s_n} h_{n} = pu_{n}^{p-1}h_{n}-\lambda h_{n},\quad h_{n}\in \cH^{s_n}_0(\Omega)\cap C^\infty(\Omega),\quad \|h_{n}\|_{L^\infty}=1\qquad \text{for all $n\in\N.$  }
\end{align*}
In the following, we use $C>0$ to denote possibly different constants independent of $n$. Observe that, by Theorem~\ref{thm:unif}, passing to a subsequence,
\begin{align*}
\|h_{n}\|_{s_n}^2
=p\int_{\Omega}u_{n}^{p-1}h_{n}^2-\lambda \|h_{n}\|_{L^2}^2
\leq |\Omega|(p\|u_{n}\|_{L^\infty}^{p-1}+|\lambda|)\leq C\qquad \text{for all $n\in\N.$}
\end{align*}

By Lemmas \ref{Fatou:arg} and \ref{ustar:lem}, there are $v\in H^1_0(\Omega)$ and $u\in {\mathcal M}_1$ such that $h_{n}\to h$ and $u_{n}\to u$ in $L^{\infty}(\Omega)$. Then,
\begin{align*}
\lim_{n\to\infty}\int_\Omega u_{n}^{p-1}h_{n}\varphi\, dx=\int_\Omega u^{p-1}h\varphi\, dx.
\end{align*}
Note that $u$ is the unique solution of \eqref{c}. Then, by Lemma \ref{Fatou:arg},
\begin{align*}
\int_{\Omega} \nabla h\nabla\varphi\, dx
&=\lim_{n\to\infty}\int_\Omega  (pu_{n}^{p-1}-\lambda)h_{n}\varphi\, dx
=\int_\Omega  (pu^{p-1}-\lambda)h\varphi\, dx,
\end{align*}
for $\varphi\in C^\infty_c(\Omega)$, which contradicts the nondegeneracy of the limiting problem \eqref{Omega}.

Next we prove the uniqueness of solutions of \eqref{main} for $s$ sufficiently close to 1.  We argue as in Theorem~\ref{exit p small}.  By contradiction, assume that $u_n$ and $v_n$ are two distinct solutions of problem \eqref{main} with $p\in(1,2^*-1)$ and $s_n\to 1^-$.  By Lemma~\ref{ustar:lem} and the uniqueness of solutions of the problem with $s=1$, we have that $u_n\to u$, $v_n\to u,$ in $L^{\infty}(\Omega),$ where $u\in {\mathcal M}_1$ is the unique solution of \eqref{main} for $s=1$. Let $w_n:= \frac{u_n-v_n}{\|u_n-v_n\|_{L^\infty}}$, then
 \begin{align*}
 (-\Delta)^{s}w_n=\alpha_n(x)w_n-\l w_n=:g_n \quad\text{ in } \O, \qquad 
w_n=0\quad \text{ in }\ren\backslash \O,\qquad \|w_n\|_{L^\infty}=1,
 \end{align*}
 where $\alpha_n:=\int_0^1 p(t u_n +(1-t) v_n)^{p-1}\, dt$ satisfies that $\|\alpha_n\|_{L^\infty} <C$ for all $n\in\N$ and for some $C>0$ (by Theorem~\ref{thm:unif}) and, by dominated convergence, $\alpha_n\to p u^{p-1}$ a.e. in $\Omega$ as $n\to\infty.$   By Lemma~\ref{Fatou:arg}, there is $w\in H_0^1(\Omega)\backslash \{0\}$ such that $w_n\to w$ in $L^\infty(\Omega)$ and, again by dominated convergence,
 \begin{align*}
\int_\Omega \nabla w \nabla \varphi\, dx
=\lim_{n\to\infty}\int_\Omega(\alpha_n(x)w_n-\l w_n) \varphi\, dx
=\int_\Omega (pu^{p-1}-\lambda)w\varphi\, dx
 \end{align*}
 for all $\varphi\in C^\infty_c(\Omega)$, which contradicts the nondegeneracy of the limiting problem \eqref{Omega}.
\end{proof}

\begin{remark}[Uniform constants depending on upper bounds]\label{rmk:p:dep}
In the proof of Theorem~\ref{thm:scto1}, the constant $\sigma$ depends on $p$.  One can obtain a uniform $\sigma$ for all $p\in(1,p_0)$ with $p_0<2^*-1$ by considering also a sequence $p_n\subset (1,p_0]$ such that $\lim_{n\to\infty}p_n=\overline{p}\in[1,p_0]$ in the proof by contradiction of Theorem~\ref{thm:scto1}.  If $\overline{p}>1$, basically the same argument applies, whereas if $\overline{p}=1$, then one can argue as in Theorem~\ref{exit p small}.  Note, however, that this requires more technicalities, since uniform interior $C^{2s+\eps}$ regularity estimates are not available for $s\to 1^-$ (see Remark \ref{rmk:c}).  Instead, one can use regularity theory for distributional solutions, as in Theorem~\ref{thm:unif}. In order to make the ideas in our arguments more transparent, we do not pursue this here.
\end{remark}

\subsection{Proofs of the corollaries}

All the corollaries stated in the introduction follow directly from Theorem \ref{thm:scto1} and the known results for the local case, except for Corollary \ref{c:le}, which involves only least-energy solutions and it is a consequence of the next result.

\begin{theorem}
Let $N\geq 2$, $p\in(1,2^*-1)$, $\lambda>-\lambda_1^1(\Omega)$, and $\Omega$ be such that the problem 
\begin{align*}
    -\Delta u +\lambda u&= u^{p}\quad \text{ in }\Omega,\qquad u=0\quad \text{ on }\partial \Omega,
\end{align*}
has a unique least energy solution $u$ which is nondegenerate. Then, there is $\sigma=\sigma(\Omega,\lambda,p)\in(0,1)$ such that, for $s\in(\sigma, 1]$, the problem \eqref{main} has a unique least energy solution and it is nondegenerate.
\end{theorem}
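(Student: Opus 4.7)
The plan is to mimic the proof of Theorem~\ref{thm:scto1}, restricting attention throughout to least energy solutions and replacing the appeal to uniqueness of all positive solutions by uniqueness of the least energy one. The key extra ingredient is the convergence of least energy solutions already invoked inside the proof of Lemma~\ref{ustar:lem}: by \cite[Theorem 4.5]{BS20} and \cite[Theorem 1.2]{BS21}, for any $s_n\to 1^-$ and any sequence of least energy solutions $u_n\in\mathcal{M}_{s_n}$ of \eqref{main}, a subsequence converges in $L^{p+1}(\Omega)$ to a least energy solution of the local problem \eqref{c}. Combined with the uniform $L^\infty$ bound of Theorem~\ref{thm:unif} (which applies to all positive solutions, hence in particular to least energy ones) and the $C^\beta(\overline\Omega)$ compactness provided by Lemma~\ref{Fatou:arg}, this upgrades to convergence in $L^\infty(\Omega)$. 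Under the uniqueness hypothesis on the least energy solution of \eqref{c}, this limit is forced to equal the unique such solution $u$.

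With this at hand, the nondegeneracy step runs exactly as in the proof of Theorem~\ref{thm:scto1}. Arguing by contradiction, assume $s_n\to 1^-$ and $u_n\in\mathcal{M}_{s_n}$ is a degenerate least energy solution, so that there exists $h_n\in\cH^{s_n}_0(\Omega)\cap C^\infty(\Omega)$ solving $(-\Delta)^{s_n}h_n+\lambda h_n=pu_n^{p-1}h_n$ with $\|h_n\|_{L^\infty}=1$. The uniform $L^\infty$ bound on $u_n$ from Theorem~\ref{thm:unif} controls the potential $pu_n^{p-1}-\lambda$ in $L^\infty$, and testing against $h_n$ bounds $\|h_n\|_{s_n}$ uniformly. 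Applying Lemma~\ref{Fatou:arg} together with $u_n\to u$ in $L^\infty(\Omega)$, I would extract in the limit a nontrivial $h\in H^1_0(\Omega)$ satisfying $-\Delta h+\lambda h=pu^{p-1}h$, contradicting the nondegeneracy of $u$ stated in \eqref{Omega}.

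For the uniqueness step, suppose towards a contradiction two distinct least energy solutions $u_n,v_n\in\mathcal{M}_{s_n}$ for some sequence $s_n\to 1^-$. By the convergence of least energy solutions and the uniqueness hypothesis, both $u_n\to u$ and $v_n\to u$ in $L^\infty(\Omega)$. The normalized difference $w_n:=(u_n-v_n)/\|u_n-v_n\|_{L^\infty}$ then solves
\begin{align*}
(-\Delta)^{s_n}w_n=\alpha_n w_n-\lambda w_n\quad\text{in }\Omega,\qquad w_n=0\quad\text{in }\R^N\backslash\Omega,\qquad \|w_n\|_{L^\infty}=1,
\end{align*}
with $\alpha_n(x):=\int_0^1 p(tu_n+(1-t)v_n)^{p-1}\,dt\to pu^{p-1}$ in $L^\infty(\Omega)$ by dominated convergence and Theorem~\ref{thm:unif}. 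A final application of Lemma~\ref{Fatou:arg} produces a subsequential limit $w\in H^1_0(\Omega)\setminus\{0\}$ satisfying the linearized local problem at $u$, again contradicting \eqref{Omega}.

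The main obstacle is making precise the upgrade from the $L^{p+1}$-convergence of least energy solutions of \eqref{main} to least energy solutions of \eqref{c} (as formulated in \cite{BS20,BS21}) into an $L^\infty$-convergence, together with the identification of the limit as a least energy (and not merely positive) solution of \eqref{c}. Both are implicit in the proof of Lemma~\ref{ustar:lem} and can be handled as follows: the uniform $L^\infty$ bound of Theorem~\ref{thm:unif} and Lemma~\ref{Fatou:arg} provide $L^\infty$-precompactness, while the convergence $\|v\|_{s_n}\to\|v\|_1$ for smooth compactly supported test functions allows one to compare the fractional and local energies and confirm that the limit retains the least energy property.
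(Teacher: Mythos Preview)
Your proposal is correct and follows essentially the same approach as the paper. The paper's own proof consists of a single sentence: argue exactly as in Theorem~\ref{thm:scto1}, noting that by \cite[Theorem 4.5]{BS20} a sequence of least energy solutions converges to a least energy solution of the limiting problem; you have simply made explicit the steps that the paper leaves implicit.
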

\begin{proof}
The result follows by arguing exactly as in Theorem \ref{thm:scto1} and by noting that, by \cite[Theorem 4.5.]{BS20}, a sequence of least energy solutions converges to a least energy solution of the limiting problem.
\end{proof}

\begin{flushleft}
\textbf{Abdelrazek Dieb}\\
Department of Mathematics, Faculy of Mathematics and computer science \\
University Ibn Khaldoun of Tiaret\\
Tiaret 14000, Algeria\\
and\\
Laboratoire d’Analyse Nonlinéaire et Mathématiques Appliquées\\
Université Abou Bakr Belkaïd,Tlemcen, Algeria\\
\texttt{Abdelrazek.dieb@univ-tiaret.dz}
\vspace{.3cm}

\textbf{Isabella Ianni}\\
Dipartimento di Scienze di Base e Applicate per l’Ingegneria\\
Sapienza Universita di Roma\\
Via Scarpa 16, 00161 Roma, Italy\\
\texttt{isabella.ianni@uniroma1.it} 
\vspace{.3cm}

\textbf{Alberto Saldaña}\\
Instituto de Matemáticas\\
Universidad Nacional Autónoma de México\\
Circuito Exterior, Ciudad Universitaria\\
04510 Coyoacán, Ciudad de México, Mexico\\
\texttt{alberto.saldana@im.unam.mx} 
\vspace{.3cm}
\end{flushleft}

\end{document}